







\documentclass[twocolumn]{autart}    

\usepackage{cite}
\usepackage{psfrag}
\usepackage{amsfonts}
\usepackage{amssymb}
\usepackage{enumerate}
\usepackage{appendix}
\usepackage{psfrag,epsfig,graphicx,ae}
\usepackage{color}

\interdisplaylinepenalty=2500

\usepackage{verbatim}

\usepackage[applemac]{inputenc}
\usepackage{graphicx} 
\usepackage{amsmath}
\usepackage{graphics}
\usepackage{graphicx,comment,}          

\usepackage{amsmath} 
\usepackage{amssymb}  
\usepackage{cite}
\usepackage{tikz}
\usepackage{epstopdf}
\usepackage{epsfig}
\usepackage{caption}
\usepackage{subcaption}
\usepackage{lscape}
\usepackage{psfrag}
\usepackage{mathrsfs}
\usepackage{tikz}
\usetikzlibrary{shapes,arrows}
\usetikzlibrary{arrows, decorations.markings}

\usepackage{eurosym}
\usepackage{amstext} 
\DeclareRobustCommand{\officialeuro}{%
  \ifmmode\expandafter\text\fi
  {\fontencoding{U}\fontfamily{eurosym}\selectfont e}}

\newcounter{teocount}
\newcounter{propcount}
\newcounter{corcount}
\newcounter{remcount}
\newcounter{defcount}
\newcounter{asscount}
\newcounter{excount}
\newcounter{probcount}
\newcounter{lemcount}

\newtheorem{remm}[remcount]{Remark}
\newtheorem{assumption}[asscount]{Assumption}
\newtheorem{definition}[defcount]{Definition}
\newtheorem{proposition}[propcount]{Proposition}
\newtheorem{theorem}[teocount]{Theorem}
\newtheorem{corollary}[corcount]{Corollary}
\newtheorem{ex}[excount]{Example}
\newtheorem{problem}[probcount]{Problem}
\newtheorem{lemma}[lemcount]{Lemma}

\newenvironment{remark}{\begin{remm}\rm }{\hfill \hspace*{1pt} \hfill $\lrcorner$\end{remm}}
\newenvironment{example}{\begin{ex}\rm }{\hfill \hspace*{1pt} \hfill $\circ$\end{ex}}
\newenvironment{proof}{{\em Proof. }}{\hfill \hspace*{1pt} \hfill $\blacksquare$}

\newcommand{\real}{{\mathbb R}}
\newcommand{\nt}{{\mathbb N}}

\begin{document}

\begin{frontmatter}

\title{Barrier Functionals for the Analysis of Complex Systems: \\An Optimization-Based Approach\thanksref{footnoteinfo}} 

\thanks[footnoteinfo]{M. Ahmadi is supported by  a Clarendon Scholarship and  the Sloane-Robinson Scholarship. A.~Papachristodoulou is supported in part by EPSRC projects EP/J012041/1, EP/M002454/1 and EP/J010537/1. A preliminary version of this paper was presented at the 2015 American Control Conference, July 1-3, Chicago, IL, USA. Corresponding author M.~Ahmadi. Tel. +44 1865 2 83036.
Fax +44 1865 273010.}

\author[Rome]{Mohamadreza Ahmadi}\ead{mohamadreza.ahmadi@eng.ox.ac.uk},    
\author[france]{Giorgio Valmorbida}\ead{giorgio.valmorbida@l2s.centralesupelec.fr},               
\author[Rome]{Antonis Papachristodoulou}\ead{antonis@eng.ox.ac.uk}  

\address[Rome]{Department of Engineering Science,  University of Oxford, Parks Road, Oxford, OX1 3PJ, UK.}             
\address[france]{ Laboratoire des Signaux et Syst\`emes, CentraleSup\'elec, CNRS, Univ. Paris-Sud, Universit\'e Paris-Saclay, 3~Rue~Joliot-Curie, Gif sur Yvette
91192, France.}        

\begin{keyword}                           
Set Avoidance,  Moment Estimation, Sum of Squares, Feynman-Kac Formula, Integral Inequalities.              
\end{keyword}                             

\begin{abstract}                          
We propose a methodology to address two  analysis problems concerning complex systems, namely bounding state functionals of stochastic differential equations (SDEs) and verifying set avoidance of systems described by partial differential equations (PDEs).  The proposed method  is based on barrier functionals, which are functionals of the states of the studied systems. The proposed method does not require the approximation of  solutions nor the stability of trajectories. In the case of SDEs, the formulation relies on a generalized version of the Feynman-Kac formula and results in moments bounds for nonlinear SDEs. Furthermore, we show that the analysis problems can be cast as optimization problems and can be solved by semi-definite programming.
\end{abstract}

\end{frontmatter}

\section{Introduction}

\label{sec:intro}


\subsection{Motivation:}

The dynamics of  physical and engineered systems are often modeled by differential equations. Solving  these differential equation models  is not straightforward in general, especially in the case of nonlinear differential equations. Hence, numerical methods, such as the finite-difference method, the finite-element method~\cite{HOLLAND19743} and the Euler-Maruyama method~\cite{Burrage2000171}, are used to approximate the solutions.  Yet, for  significant classes of systems, approximating the solutions is  too computationally demanding, especially if there is parameter/initial condition uncertainty. Therefore, researchers have turned to alternative methods for studying the properties of these systems. In the control literature, the renowned Lyapunov theorem for stability is one  such approach, which answers the stability question without solving the differential equation~\cite{citeulike1443928}. Verifying  input-output properties using dissipativity theory  is another example of a method which averts computing the solutions for different initial conditions, inputs and etc~\cite{ref1}.




In some applications, rather than the solutions, we may only be interested in  a functional output of the solutions of an underlying system.  For example, for systems described by SDEs, we might be interested in evaluating the moments or the time-integral of the moments at a particular point in time. As an example, functionals of the state of the stochastic models for asset prices describe the price of an option~\cite{Mer73}.  For nonlinear SDEs, modeling the dynamics of the statistical moments is not trivial, because the dynamics of the lower-order moments depend on the higher-order moments.  This leads to the moment closure problem, which is based on closing the dynamics by expressing the higher-order moments as a function of the lower-order moments. The problem has been studied in the context of biological applications~\cite{7226883,KCMG05} and, in particular, biochemical reaction networks~\cite{4762250,5605237}. However, existing methods only provide approximate solutions to the moment dynamics.

In the case of systems described by partial differential equations (PDEs), the review article~\cite{Babuska1994} presents a number of functional approximation problems in structural mechanics. The far-field pattern in electromagnetics and acoustics~\cite{Monk98theadaptive} and energy release rate in elasticity theory~\cite{Xuan2006430} are both functionals of the solutions to the governing differential equations. Perhaps a more interesting example is in fluid mechanics, i.e., lift and drag forces acting on an airfoil surrounded by  incompressible flow (described by the Navier-Stokes equations) are defined as functionals of pressure and shear forces over the surface of the airfoil~\cite{Machiels1999}.

In this paper, we formulate the functional  bounding problem as a \emph{set avoidance problem}. The set avoidance problem can also be regarded as a tool for \textit{safety verification}. We call the set that is avoided by the functional output trajectories, for all time or for finite intervals of time, an \emph{undesirable set}.  Regarding set avoidance strategies for ordinary differential equation (ODE) systems, methods based on approximations of the reachable set are considered in~\cite{Kurzhanski2000201} for linear systems and in~\cite{TMBO03} for nonlinear systems. One method to study the set avoidance problem, which does not require the approximation of reachable sets, uses \textit{barrier certificates}. Barrier certificates \cite{P06} were introduced for model invalidation of ODEs with polynomial vector fields defined on semi-algebraic sets.  This method has been used to address safety verification of nonlinear and hybrid systems~\cite{PJP07}, safety analysis of time-delay systems~\cite{1582846},   and model invalidation of complex biological networks~\cite{AP09}.
    Moreover, compositional barrier certificates and converse results were studied in \cite{SWP12} and \cite{7236867}, respectively.
    Computation of barrier certificates using interval analysis was studied in~\cite{BCDK14}.
    Controller synthesis methods based on barrier certificates were also described in~\cite{WA07}. 
    
\subsection{Contribution:}

The main contributions of this paper are threefold:

\begin{itemize}
\item We propose a method for bounding state functionals of systems described by stochastic differential equations. The method is based on a generalized version of the Feynman-Kac formula, which describes the backward dynamics of a cost functional of the moments of an SDE. We use barrier functionals to compute these state functionals. We demonstrate that if the barrier functional satisfies two inequalities along the solutions of the backward dynamics, then  we can infer bounds on the cost functional of the moments.
\vspace{5mm}
\item We present a method for verifying set avoidance of  systems described by PDEs. In this case, the considered sets are subsets of Hilbert spaces rather than only subsets of Euclidean spaces in the case of systems described by ODEs. This method is also based on barrier functionals. Provided that the barrier functional satisfies a set of inequalities along the solutions of the PDE, we can conclude that the solutions avoid an undesirable set for all time or at some specific time instant.
\vspace{5mm}
\item We show that, if some assumptions are made regarding the polynomial dependence on the problem data and if the subsets of Hilbert spaces are defined by integral inequalities with polynomial integrands, the results in the paper can be implemented using semi-definite programming. In this regard, the results in~\cite{VAP16} and in~\cite{Ahmadi2016163} consider integral inequalities with time independence. 
\end{itemize}

A preliminary application of the barrier functionals for upper bound estimation of functional outputs of  PDEs with polynomial data was discussed in~\cite{AVP15}, where we showed that the output functional approximation problem can be cast in a set avoidance framework. The output functional estimation problem consist of a particular instance of the set of problems that can be studied using barrier functionals as discussed in this paper.  Moreover, the proposed computational method in ~\cite{AVP15} is conservative when compared with the approach we propose in this paper in Section~\ref{sec:computational}. 
 
\subsection{Outline:}

{
  The rest of the paper is organized as follows. In the next section, we review some mathematical preliminaries to systems described by SDEs and PDEs. In Section~\ref{sec:mainresultsss}, we propose a framework based on barrier functionals to bound state functionals of SDEs and to check set avoidance for a PDE system. In Section~\ref{sec:computational}, under the assumption of polynomial data, we propose a method based on Semi-Definite Programming (SDP) to verify a class of integral inequalities with constraints, which will be used to compute the barrier functionals. The proposed method is illustrated with  examples in Section~\ref{sec:examples}. Finally, we conclude the paper in Section~\ref{sec:conclusion}.
}
\vspace{0.5 cm}

\textbf{Notation:} {The $n$-dimensional Euclidean space is denoted by $\mathbb{R}^n$, the set of nonnegative reals by $\mathbb{R}_{\ge0}$, and the set of nonpositive reals by $\mathbb{R}_{\le0}$. The $n$-dimensional set of positive integers is denoted by $\mathbb{N}^n$, and the $n$-dimensional space of non-negative integers is denoted by $\mathbb{N}^n_{\ge0}$. 
We use $M^{T}$ to denote the transpose of matrix $M$ and $\mathrm{Tr}\{M\}$ is the trace of the square matrix $M$. 
A domain $\Omega$ is an open subset of $\mathbb{R}^n$ and the boundary of $\Omega$ is denoted $\partial \Omega$. The ring  of polynomials on real variables $x \in \real^n$ and $y \in \real^m$  is  denoted $\mathcal{R}[x,y]$.
The space of $k$-times continuous differentiable functions defined on $\Omega$ is denoted by $\mathcal{C}^k(\Omega)$ and the space of $\mathcal{C}^k(\Omega)$ functions mapping to a set $\Gamma$ is denoted \mbox{$\mathcal{C}^k(\Omega ; \Gamma)$}. For a multivariable function $f(x,y)$, we use  $f(x,\cdot)\in \mathcal{C}^k[x]$ to denote the $k$-times continuous differentiability of $f$ with respect to variable $x$.  If $p\in \mathcal{C}^1(\Omega)$, then $\partial_x p$ denotes the derivative of $p$ with respect to variable $x \in \Omega$.  In addition, we adopt Schwartz's  multi-index notation. For $u \in \mathcal{C}^{k} (\Omega ; \mathbb{R}^m)$, $\Omega \in \real^{n}$, $\alpha \in \nt_0$, defining matrix $A \in \nt_0^{\sigma \times n}$, $\sigma = \frac{(n+\alpha)!}{n!\alpha!}$ (denote its $i$th row $A_i$) which contains a set of ordered elements satisfying $\sum_{j}A_{ij}\leq \alpha$, we have
$$
D^{\alpha}u := \left(u_1, \partial_x{u_{1}},  \ldots, {\partial_x^{A_\sigma}{u_{1}}},  \ldots, u_m,{\partial_x u_{m}}, \ldots, {\partial^{A_\sigma}_x{u_{m}}} \right), $$
where $\partial_x^{A_{i}}(\cdot)=\partial_{x_1}^{A_{i1}}(\cdot) \cdots \partial_{x_n}^{A_{in}}(\cdot)$. We use the same multi-index notation to denote a vector of monomials up to degree $\alpha$ on a variable $x$ as $\zeta^\alpha$. For instance, for $x\in \real^2$, $\zeta^2(x) = (1,x_1,x_2,x_1^2, x_1x_2, x_2^2)$. For a function $f \in \mathcal{C}^1(\Omega)$ and $g \in \mathcal{C}^2(\Omega)$, $\nabla f$ denotes the gradient vector, $\nabla^2 g$ denotes the Hessian matrix and $\Delta g$ is the Laplacian operator. A Hilbert space of functions defined over the domain $\Omega$ with the norm
$\| u \|_{ \mathcal{W}^p_\Omega} = \left( \int_\Omega \sum_{i=0}^p (\partial^i_x u)^{T} (\partial^i_x u)  \,\, \mathrm{d}x   \right)^{\frac{1}{2}}$ is denoted  $ \mathcal{W}^p_{\Omega}$.
 By $f \in \mathcal{L}^2(\Omega;\Gamma)$, we denote a square integrable function mapping $\Omega\subseteq \mathbb{R}^n$ to $\Gamma \subseteq \mathbb{R}^m$. Also, $Dom(\mathcal{A})$ and $Ran(\mathcal{A})$ denote the domain and range of the operator $\mathcal{A}$, respectively.  For a random variable $X$, $E\left[ X\right]$ denotes its expected value. 
}

\section{Preliminaries}

In this section, we review some of the preliminary results and definitions.

Let $T>0$ and let $(\Gamma, \mathcal{J}, \{\mathcal{J}_s\}_{s \ge 0}, \mathrm{P})$ be a complete and right-continuous  filtered probability space, where $\Gamma$ is a sample space,  $\{\mathcal{J}_s\}_{s \ge 0}$ with $\mathcal{J}_s \subseteq \mathcal{J}$ for each $s$  is a filteration of the $\sigma$-algebra $\mathcal{J}$, and $\mathrm{P}$ is the probability measure function. The filteration $\mathcal{J}_s$ satisfies the usual conditions, i.e., it is complete, right continuous ($J_s=\cap_{r>s}\mathcal{J}_r$) and $\mathcal{J}_0$ contains all $\mathrm{P}$-negligible events.  Consider the following SDE
\begin{equation} \label{eq:SDE}
\begin{cases} \mathrm{d}X(s) = b(s,X(s)) \mathrm{d}s + \sigma(s,X(s)) \mathrm{d}W(s), & s \in [t,T], \\
 X(t)=x,\end{cases}
\end{equation}
where $x$ is a $\mathcal{J}_t$-measurable random variable,   $X(s) \in \mathbb{R}^n$ denotes the states and $W(s) \in \mathbb{R}^m$ is an $m$-dimensional standard $\{\mathcal{J}_s\}_{s \ge 0}$-Wiener process starting at $t$ (i.e., $W(t)=0$). A solution to SDE~\eqref{eq:SDE} satisfies the following  integral equation almost surely
\begin{multline*}
X(s)= X(0)+ \int_0^{s} b(\theta,X(\theta)) \,\, \mathrm{d}\theta \\+\int_0^{s} \sigma(\theta,X(\theta)) \,\, \mathrm{d}W(\theta),~s \in [0,T]
\end{multline*}
which is the sum of a Lebesgue integral and an It\^{o} integral~\cite[Chapter 3]{Ok03}. 

Also, consider the following backward in time PDE
\begin{eqnarray} \label{eq:FK}
- \partial_t u(t,x) &=& \frac{1}{2} \mathrm{Tr} \big \{ \sigma(t,x) \sigma^{T}(t,x) \nabla^2 u(t,x) \big\} \nonumber  \\ &&+ b^{T}(t,x) \nabla u(t,x) + c(t,x) u(t,x) \nonumber \\ &&+ h(t,x),\quad \quad \quad(t,x) \in [0,T) \times \mathbb{R}^n, \nonumber \\
u(T,x) &=& f(x) \in  \mathcal{U}_T,
\end{eqnarray}
where $\mathcal{U}_T$ is the set of terminal conditions.

\begin{assumption}
\label{ass:1}
 The maps $b: [0,T] \times \mathbb{R}^n \to \mathbb{R}^n$ and $\sigma: [0,T] \times \mathbb{R}^n \to \mathbb{R}^{n\times m}$, $c,h:[0,T] \times \mathbb{R}^n \to \mathbb{R}$, and $f:\mathbb{R}^n \to \mathbb{R}$ are uniformly continuous, $c$ is bounded, and there exist a constant $L>0$ such that for $\phi = b,\sigma, c,h,$
\begin{equation*}
\begin{cases}
| \phi(t,x) - \phi(t,\hat{x}) | \le L |x -\hat{x} |, & \forall t \in [0,T], x,\hat{x} \in \mathbb{R}^{n}\\
|\phi(t,0)|\le L, & \forall t \in [0,T].
\end{cases}
\end{equation*}
\end{assumption}

\begin{remark}
If  Assumption~\ref{ass:1} holds, there exists a unique $\{\mathcal{J}_s\}_{s \ge 0}$-adapted continuous process $X(s),~s\ge0$ that is a unique strong solution to the SDE~\eqref{eq:SDE}~(see~\cite[Definition~6.2]{YZ99}).
\end{remark}

We recall the following result \cite[Theorem 4.1]{YZ99} which generalizes the Feynman-Kac formula.

\begin{theorem} \label{thm:FK}
Let  Assumptions I hold. Then, \eqref{eq:FK} admits a unique viscosity solution given by
\begin{multline} \label{eq:Solution}
u(t,x) = E\left[ \int_t^T h(s,X(s)) e^{-\int_t^s c(r,X(r)) \mathrm{d}r} \mathrm{d}s \right.\\ \left.
+f(X(T)) e^{-\int_t^T c(r,X(r)) \mathrm{d}r} \mid X(t)=x \right],\\~(t,x) \in [0,T]\times \mathbb{R}^n,
\end{multline}
where $X$ is the unique strong solution of SDE~\eqref{eq:SDE}. In addition, if~\eqref{eq:SDE} admits a classical solution, then \eqref{eq:Solution} is a classical solution of~\eqref{eq:FK}.
\end{theorem}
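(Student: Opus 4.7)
The plan is to follow the classical Feynman--Kac argument and then explain how it extends to the viscosity setting. The backbone of the proof is an application of It\^o's formula combined with a martingale argument, while the existence/uniqueness claim in the viscosity sense is obtained by verifying the representation formula against test functions and invoking a comparison principle.

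\textbf{Step 1 (Classical case via It\^o).} First I would assume that $u \in \mathcal{C}^{1,2}([0,T] \times \mathbb{R}^n)$ is a classical solution of \eqref{eq:FK} and construct the auxiliary process
\begin{equation*}
Y(s) \;=\; u(s,X(s))\, e^{-\int_t^s c(r,X(r))\,\mathrm{d}r} + \int_t^s h(\theta,X(\theta))\, e^{-\int_t^\theta c(r,X(r))\,\mathrm{d}r}\,\mathrm{d}\theta,
\end{equation*}
for $s \in [t,T]$, with $X$ the strong solution of \eqref{eq:SDE} starting at $X(t)=x$ (which exists and is unique by the remark following Assumption~\ref{ass:1}). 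Applying It\^o's formula to $u(s,X(s))$ and the product rule with the exponential discount factor, the finite-variation part of $\mathrm{d}Y(s)$ is
\begin{equation*}
e^{-\int_t^s c\,\mathrm{d}r}\left[\partial_t u + b^T \nabla u + \tfrac{1}{2}\mathrm{Tr}\{\sigma\sigma^T \nabla^2 u\} - c u + h\right]\!(s,X(s))\,\mathrm{d}s,
\end{equation*}
which vanishes identically by the PDE in \eqref{eq:FK}. Therefore $Y$ is a local martingale; the Lipschitz and linear-growth bounds in Assumption~\ref{ass:1}, together with boundedness of $c$, yield the standard second moment estimates on $X$ and on the stochastic integral, upgrading $Y$ to a true martingale on $[t,T]$. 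Taking conditional expectations at $s=T$ and using $u(T,x)=f(x)$ gives $u(t,x) = E[Y(T)\mid X(t)=x]$, which is exactly \eqref{eq:Solution}.

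\textbf{Step 2 (Viscosity existence).} When a classical solution is not available, I would \emph{define} $u(t,x)$ by the right-hand side of \eqref{eq:Solution} and show it is a viscosity solution. Continuity of $u$ follows from the continuous dependence of $X$ on the initial data $(t,x)$ (a consequence of the Lipschitz hypotheses) and from dominated convergence applied to \eqref{eq:Solution}. To verify the viscosity sub/supersolution inequalities, I would take any test function $\varphi \in \mathcal{C}^{1,2}$ touching $u$ from above (respectively below) at a point $(t_0,x_0)$, use the dynamic programming identity
\begin{equation*}
u(t_0,x_0) = E\left[ \int_{t_0}^{t_0+\delta} h(s,X(s))\, e^{-\int_{t_0}^s c\,\mathrm{d}r}\,\mathrm{d}s + u(t_0+\delta, X(t_0+\delta))\, e^{-\int_{t_0}^{t_0+\delta} c\,\mathrm{d}r}\right],
\end{equation*}
substitute $\varphi$ for $u$ (using the touching inequality), apply It\^o to $\varphi$, divide by $\delta$, and let $\delta \downarrow 0$. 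The limit produces precisely the sub/super viscosity inequality for \eqref{eq:FK} at $(t_0,x_0)$.

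\textbf{Step 3 (Uniqueness and final assembly).} Uniqueness follows from the standard comparison principle for viscosity solutions of uniformly continuous, second-order parabolic equations with Lipschitz coefficients and bounded zero-order term, for which the hypotheses of Assumption~\ref{ass:1} are tailored. Given any two viscosity solutions with the same terminal data, the comparison principle forces them to coincide, identifying \eqref{eq:Solution} as the unique viscosity solution. Finally, if a classical solution happens to exist, Step~1 shows it must equal \eqref{eq:Solution}, so the two notions agree.

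\textbf{Main obstacle.} The bookkeeping in Step~1 (It\^o plus martingale check) is routine given Assumption~\ref{ass:1}. The genuine difficulty lies in Step~2--3: justifying that \eqref{eq:Solution} defines a viscosity solution requires the dynamic programming principle in a form compatible with merely continuous, not $\mathcal{C}^{1,2}$, candidates, and uniqueness hinges on a comparison principle for second-order parabolic PDEs with unbounded spatial domain. Both are known (and invoked in the cited \cite[Theorem 4.1]{YZ99}), but they are the technically demanding ingredients that make the viscosity formulation work without the regularity needed for a purely It\^o-based derivation.
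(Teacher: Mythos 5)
The paper does not prove this theorem at all: it is stated as a recalled result and attributed directly to \cite[Theorem 4.1]{YZ99}, so there is no in-paper argument to compare against. Your outline is the standard proof of that cited result and is essentially the argument given in the reference itself: It\^o's formula plus a martingale upgrade for the classical/verification direction, the probabilistic representation checked against $\mathcal{C}^{1,2}$ test functions through the Markov (dynamic-programming) identity for the viscosity property, and a comparison principle for uniqueness. The only points worth flagging are the ones you already identify as the real work — the martingale upgrade and the comparison principle both rely on the moment and growth estimates that the Lipschitz/linear-growth hypotheses of Assumption~\ref{ass:1} are designed to supply — so as a sketch your proposal is sound, but within this paper the correct "proof" is simply the citation.
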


\begin{remark}
The function $u(t,x)$ in~\eqref{eq:Solution} specifies the value of the functional 
$$
E\left[ \int_t^T h(s,X(s)) e^{-\int_t^s c(r,X(r)) \mathrm{d}r} \mathrm{d}s+f(X(T)) \right],
$$
for a solution of SDE~\eqref{eq:SDE} starting at time $t$ with initial condition $x$.
\end{remark}

The above theorem relates the solutions of the SDE~\eqref{eq:SDE} to the solution of the backward PDE~\eqref{eq:FK} through functional~\eqref{eq:Solution}. The functional given in~\eqref{eq:Solution} encompasses a rich class of state functionals of SDE~\eqref{eq:SDE}. For instance, for $c=0$, $h(s,X(s))=X^2(s)$ and $f(X(T))=X^2(T)$, 
$$
u(0,x)= E\left[ \int_0^T X^2(s)\mathrm{d}s+ X^2(T) \mid X(0)=x \right],
$$
represents the finite-time cost functional with terminal value of the second moment of the solutions to SDE~\eqref{eq:SDE}.

In order to specify boundary conditions for the backward PDE~\eqref{eq:FK} and limit $x$ to a bounded domain $\Omega \subset \mathbb{R}^d$, it is possible to formulate a theorem which relates SDE~\eqref{eq:SDE} to a PDE with boundary conditions. Let $\Omega \subseteq \mathbb{R}^d$ be a bounded domain with $\mathcal{C}^1$ boundary $\partial \Omega$. Define 
\begin{equation*}
\tau :=  \inf \left\{  s \in [t,T] \mid X(s) \notin \Omega \right\}.
\end{equation*}
Then, we can obtain the following result \cite[Theorem 4.2]{YZ99}.
\begin{theorem} \label{thm:FKBC}
Consider \eqref{eq:FK} with boundary conditions $u \mid_{\partial \Omega} = \psi(t,x)$ and SDE~\eqref{eq:SDE}. Let Assumptions I hold with the  functions $b,\sigma,c,h$ defined on $[0,T] \times \Omega$ and $f$ defined on $\Omega$. Let
\begin{equation*}
\Psi(t,x) = \begin{cases} f(x), & (t,x) \in [0,T] \times \Omega, \\ \psi(t,x), & (t,x)\in[0,T] \times \partial \Omega,  \end{cases}
\end{equation*}
be a continuous function on $([0,T] \times \Omega) \cup ([0,T] \times \partial \Omega)$. 
 Then, \eqref{eq:FK} with boundary conditions $u \mid_{\partial \Omega} = \psi(t,x)$ admits a unique viscosity solution given by
\begin{multline} \label{eq:Solution2}
u(t,x) = E\left[ \int_t^\tau h(s,X(s)) e^{-\int_t^s c(r,X(r)) \mathrm{d}r} \mathrm{d}s \right. \\ \left. +\Psi(\tau,X(\tau)) e^{-\int_t^\tau c(r,X(r)) \mathrm{d}r} \mid X(t)=x \right],\\~(t,x) \in [0,T]\times \Omega,
\end{multline}
where $X$ is the unique strong solution of SDE~\eqref{eq:SDE}. In addition, if~\eqref{eq:SDE} admits a classical solution, then \eqref{eq:Solution2} gives a classical solution.
\end{theorem}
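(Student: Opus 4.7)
The plan is to reduce this to Theorem~\ref{thm:FK} by exploiting the strong Markov property of the diffusion together with the It\^o formula applied to a suitably discounted process stopped at the first exit time $\tau$ from $\Omega$. Because $\partial \Omega$ is $\mathcal{C}^1$ and the coefficients $b,\sigma$ are Lipschitz under Assumption~\ref{ass:1}, the stopping time $\tau \wedge T$ is finite almost surely, and $X(\tau) \in \partial \Omega$ on $\{\tau < T\}$ while $X(T) \in \overline{\Omega}$ on $\{\tau = T\}$. The continuity of $\Psi$ on $([0,T]\times\Omega) \cup ([0,T]\times\partial\Omega)$ ensures that the terminal/boundary contribution $\Psi(\tau,X(\tau))$ is well-defined.

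First I would treat the case where the PDE admits a classical solution $u \in \mathcal{C}^{1,2}([0,T]\times\Omega)$ extending continuously to the parabolic boundary. Applying It\^o's formula to the process
\begin{equation*}
M(s) := u(s,X(s)) e^{-\int_t^s c(r,X(r))\mathrm{d}r} + \int_t^s h(r,X(r)) e^{-\int_t^r c(\theta,X(\theta))\mathrm{d}\theta}\mathrm{d}r,
\end{equation*}
on $[t,\tau \wedge T]$, the bounded-variation part collapses using the PDE in~\eqref{eq:FK}, leaving only a local martingale given by an It\^o integral against $\mathrm{d}W$. The Lipschitz bounds in Assumption~\ref{ass:1}, the boundedness of $c$, and the fact that $X$ remains in the bounded domain $\Omega$ up to $\tau$ deliver enough integrability for the optional stopping theorem. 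Taking expectations conditionally on $X(t)=x$ and identifying $u(\tau,X(\tau))$ with $\Psi(\tau,X(\tau))$ (by the terminal condition on $\{\tau=T\}$ and the boundary condition on $\{\tau<T\}$) yields~\eqref{eq:Solution2}.

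For the viscosity case the argument requires an approximation step. I would regularize the data by mollifying $b,\sigma,c,h,f,\psi$ to Lipschitz-smooth versions and approximate $\Omega$ from the interior by a sequence of smooth subdomains $\Omega_n \uparrow \Omega$, for each of which a classical solution $u_n$ exists and satisfies a formula analogous to~\eqref{eq:Solution2} with exit time $\tau_n$. Stability of viscosity solutions under uniform convergence of the data, together with the almost-sure convergence $\tau_n \to \tau$ and uniform integrability of the running cost and terminal terms (using boundedness of $c$ and the Lipschitz bounds to get $L^p$ estimates on $X$), lets me pass to the limit on both sides to recover~\eqref{eq:Solution2} for the viscosity solution of the original problem. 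Uniqueness of the viscosity solution (under the hypotheses invoked in~\cite{YZ99}) then forces the representation.

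The main obstacle will be the viscosity-solution step, since classical It\^o calculus cannot be applied directly when $u$ is merely continuous. The approximation route above circumvents this, but care is needed in controlling $\tau_n \to \tau$ (one uses that $\partial \Omega$ is $\mathcal{C}^1$ so that $X$ exits transversally a.s.\ and boundary non-attainability pathologies are avoided) and in verifying uniform integrability of $\Psi(\tau_n,X(\tau_n))$, which follows from continuity of $\Psi$ on the compact parabolic boundary of $[0,T]\times\Omega$.
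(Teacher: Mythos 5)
First, note that the paper does not prove this statement at all: Theorem~\ref{thm:FKBC} is quoted directly from \cite[Theorem 4.2]{YZ99} (just as Theorem~\ref{thm:FK} is quoted from \cite[Theorem 4.1]{YZ99}), so there is no in-paper argument to compare against. Judged on its own merits, your classical-solution step is the standard verification argument and is essentially sound: apply It\^o's formula to the discounted process $M(s)$ on $[t,\tau\wedge T]$, use the PDE to cancel the bounded-variation part, localize to make the stochastic integral a true martingale, and take conditional expectations, identifying $u(\tau\wedge T, X(\tau\wedge T))$ with $\Psi$ via the terminal and boundary data.

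The viscosity step, however, has genuine gaps as written. (i) Mollifying $b,\sigma,c,h,f,\psi$ and shrinking the domain does not yield classical solvability of the approximate problems: under Assumption~\ref{ass:1} the diffusion matrix $\sigma\sigma^{T}$ may be degenerate, and mollification does not remove degeneracy, so you would additionally need a vanishing-viscosity perturbation $\sigma\sigma^{T}+\epsilon I$; but then the approximating diffusions $X^{\epsilon}$ and their exit times change, and controlling $\tau^{\epsilon}\to\tau$ is exactly the delicate point. (ii) The claim that a $\mathcal{C}^1$ boundary forces a.s.\ transversal exit and $\tau_n\to\tau$ is false for possibly degenerate diffusions: $\sigma$ may vanish and $b$ may be tangent to $\partial\Omega$, exit may never occur or the exit time may be discontinuous under domain perturbation; continuity of exit times requires regularity of boundary points (e.g.\ uniform ellipticity or an exterior nondegeneracy condition), which is not available here. (iii) ``Stability of viscosity solutions under uniform convergence of the data'' is not enough; you need local uniform convergence of the solutions $u_n$ themselves (equicontinuity or barrier estimates) or a half-relaxed-limits argument combined with a strong comparison principle for the boundary-value problem, none of which is supplied. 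The route actually used in the cited reference avoids all of this: define $u$ directly by the stochastic formula \eqref{eq:Solution2}, prove its continuity from the standard moment and stability estimates for the strong solution of \eqref{eq:SDE}, establish a dynamic programming (Markov/semigroup) identity, and then verify the viscosity sub- and supersolution inequalities by applying It\^o's formula to smooth test functions touching $u$ from above or below; uniqueness then follows from a comparison theorem for \eqref{eq:FK} with the given boundary data. If you want a self-contained proof, that is the path to follow rather than the mollification/domain-approximation scheme.
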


\begin{remark}
Consider equation~\eqref{eq:FK} with $c,h=0$. This is equivalent to the \emph{backward Kolmogorov equation}~\cite[Chapter 4]{Risken89}, for which we have
$$
u(t,x) = E\left[ f\left(X(T)\right) \mid X(t)=x \right],
$$
where $X$ is described by~\eqref{eq:SDE}.
\end{remark}

We also study conditions for a class of forward PDE systems. Let $ \mathcal{U}$ be a Hilbert space, consider the following differential equation
\begin{equation}  
\label{eq:pde}
\begin{cases}
\partial_t u(t,x)= \mathscr{F} u(t,x), \quad x\in \Omega \subset\real^n,~t \in [0,t_0],\\
y(t) = \mathscr{H} u(t,x)\\
u(0,x)=u_0(x) \in \mathcal{U}_0\subset Dom(\mathscr{F})\\
u\in \mathcal{U}_b  
\end{cases}
\end{equation}
$\mathcal{U}_b$ is the subspace of $\mathcal{U}$ defined by the boundary values of variable $u$, $\mathscr{H}: \mathcal{U} \rightarrow \real$ and $Dom(\mathscr{H})\supseteq \mathcal{U}$, the state-space of system~\eqref{eq:pde}. It is assumed that~\eqref{eq:pde} is well-posed (see Appendix~A).

 We call the set  
$$
 \mathcal{Y}_u = \big \{ u \in  \mathcal{U} \mid \mathscr{H}  u \le 0  \big\},
$$
the \emph{undesirable set}. As an example of system~\eqref{eq:pde}, consider the following system, $x\in (0,1),~t \in [0,t_0],$
\begin{equation*}  
\begin{cases}
\partial_t u(t,x)= \partial_x^2 u(t,x) -u(t,x) \partial_xu(t,x) , \\
y(t) = 5 - \int_\Omega u^{2}(t,\theta) \mathrm{d} \theta -\partial_xu(t,1)\\
 \mathcal{U}_0 = \left\lbrace u_0 \in \mathcal{L}_2 \mid \|u_0\|_{\mathcal{L}_2}\leq 1; \partial_xu_{0}\leq 0 \right\rbrace \\
\mathcal{U}_b  = \left\lbrace u \in \mathcal{W}^1 \mid \left[\begin{array}{cccc} 1& 0 & 0 & 0\\ 0 & 0 & 1& 0  \end{array}\right]  \left[\begin{array}{c} u(t,0) \\ \partial_x u(t,0) \\ u(t,1) \\ \partial_x u(t,1) \end{array}\right] = 0 \right\rbrace.
\end{cases}
\end{equation*}

In the following section, we present conditions to obtain certificates that trajectories starting in the set $\mathcal{U}_0$ \emph{avoid} the set $\mathcal{Y}_u$. 

\section{Barrier Functionals for the Analysis of SDEs and PDEs} \label{sec:mainresultsss}


We present a method to bound output functionals of SDEs an PDEs. We relate the problem of bounding the output functionals to the problem of \emph{set avoidance}, namely, the problem of verifying whether, for a set of initial condition, a set in the state space is avoided. Such a formulation also allows to obtain performance estimates whenever the output functional to be bounded represents a performance index. 


Consider the following properties of trajectories related to an initial set $\mathcal{U}_0$ and an undesirable set $\mathcal{Y}_u$

\begin{definition} [Set Avoidance at Time $t_0$] \label{def:safe}
Let $u\in~\mathcal{U}$. For a set $ \mathcal{U}_0 \subseteq  \mathcal{U}$ ($ \mathcal{U}_T \subseteq  \mathcal{U}$), an undesirable set $ \mathcal{Y}_u$, satisfying $ \mathcal{U}_0 \cap \mathcal{Y}_u = \emptyset$ ($ \mathcal{U}_T \cap \mathcal{Y}_u = \emptyset$), and a positive scalar $t_0$, system~\eqref{eq:pde} (system~\eqref{eq:FK})   \emph{avoids $ \mathcal{Y}_u$ at time $t_0$}, if, for all $u(0,x) \in \mathcal{U}_0$ ($u(T,x) \in \mathcal{U}_T$), the solutions $u(t,x)$ of system~\eqref{eq:pde} (system~\eqref{eq:FK}) satisfy $y(t_0) \notin  \mathcal{Y}_u$.
\end{definition}

\begin{definition} [Set Avoidance] \label{def:safe2}
System~\eqref{eq:pde} (system~\eqref{eq:FK})  {avoids $\mathcal{Y}_u$}, if it avoids $\mathcal{Y}_u$ in the sense of Definition~\ref{def:safe} for all $t_0\in[0,\infty)$.
\end{definition}

We are interested in solving the following problem:

\begin{problem}
Given sets $\mathcal{Y}_u$ and $ \mathcal{U}_0$ ($\mathcal{U}_T$) and $t_0>0$, verify that system~\eqref{eq:pde} (system~\eqref{eq:FK}) avoids $ \mathcal{Y}_u$ at time $t_0$.
\end{problem}

To this end, we define the \emph{Barrier Functional} 
\begin{equation} \label{eq:barrierfunctional}
B(t,u) = \mathscr{B}(t)u,
\end{equation}
 where $\mathscr{B}(t) : Dom(\mathscr{B}) \to \mathbb{R}$.

\subsection{ Bounding State Functionals of SDEs}

In the following, we propose a method based on barrier functionals to find bounds on the solutions of the PDE~\eqref{eq:FK} and therefore state functionals of~\eqref{eq:SDE}. 

\begin{theorem} [Set Avoidance for Backward Systems] \label{thm:fwd2}
Given a set of terminal conditions \begin{equation}\label{t543} \mathcal{U}_T =  \left\{ u \in \tilde{\mathcal{U}} \mid u(T,x)=f(x) \right\},\end{equation} an undesirable set~$ \mathcal{Y}_u$ such that $ \mathcal{U}_T \cap  \mathcal{Y}_u = \emptyset$, and $t_0\in [0,T]$, if there exists a barrier functional $B(t,u(t,x)) \in \mathcal{C}^1[t]$ as in \eqref{eq:barrierfunctional}
 such that the following inequalities hold
 \begin{subequations}
\label{eq:con}
\begin{multline} \label{eq:con111}
B(t_0, u(t_0,x)) - B(T, u(T,x)) >0, \\ \forall u(t_0,x) \in  \mathcal{Y}_u,~\forall u(T,x) \in  \mathcal{U}_T
\end{multline}
\vspace{-0.7cm}
\begin{equation} \label{eq:con222}
 \frac{\mathrm{d}B(t,u(t,x))}{\mathrm{d}t} \ge 0,~ \forall t\in[0,T],~\forall u \in  \tilde{\mathcal{U}},
 \end{equation}
\end{subequations}
along the solutions of \eqref{eq:FK}, then the solutions $u(t,x)$ of~\eqref{eq:FK} avoid $\mathcal{Y}_u$ at time $t_0 \in [0,T]$.
\end{theorem}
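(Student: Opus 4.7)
The plan is to argue by contradiction using the monotonicity of the barrier functional along solutions, combined with the strict separation encoded in the first inequality. This is the standard two-sided squeeze that underlies barrier certificate arguments, adapted to the backward-time setting of~\eqref{eq:FK}.

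First I would suppose, for contradiction, that the conclusion fails: there is a terminal condition $u(T,\cdot)=f \in \mathcal{U}_T$ such that the associated viscosity solution $u(\cdot,\cdot)$ of~\eqref{eq:FK} (which exists and is unique by Theorem~\ref{thm:FK}) produces $u(t_0,x) \in \mathcal{Y}_u$ at the prescribed time $t_0 \in [0,T]$. Since $\mathcal{U}_T \cap \mathcal{Y}_u=\emptyset$, we necessarily have $t_0<T$, so the interval $[t_0,T]$ is non-degenerate.

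Next I would exploit condition~\eqref{eq:con222}. Because $B(t,u(t,x)) \in \mathcal{C}^1[t]$ along the solution, the fundamental theorem of calculus applied on $[t_0,T]$ gives
\begin{equation*}
B(T,u(T,x)) - B(t_0,u(t_0,x)) \;=\; \int_{t_0}^{T} \frac{\mathrm{d}B(t,u(t,x))}{\mathrm{d}t}\,\mathrm{d}t \;\ge\; 0,
\end{equation*}
so that $B(t_0,u(t_0,x)) \le B(T,u(T,x))$. On the other hand, since by assumption $u(t_0,x) \in \mathcal{Y}_u$ and $u(T,x) \in \mathcal{U}_T$, condition~\eqref{eq:con111} applied pointwise yields
\begin{equation*}
B(t_0,u(t_0,x)) - B(T,u(T,x)) \;>\; 0,
\end{equation*}
which contradicts the previous inequality. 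Therefore no such solution exists, and every solution with terminal data in $\mathcal{U}_T$ satisfies $u(t_0,x) \notin \mathcal{Y}_u$, which is exactly avoidance of $\mathcal{Y}_u$ at time $t_0$ in the sense of Definition~\ref{def:safe}.

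The only subtle point I foresee is justifying that $t \mapsto B(t,u(t,x))$ is genuinely $\mathcal{C}^1$ along the solution so that the fundamental theorem of calculus applies: this follows from the regularity hypothesis on $B$ combined with the continuity (and, when needed, classical regularity) of the viscosity solution supplied by Theorem~\ref{thm:FK}. Everything else is a direct chain of implications, with no computation required beyond integrating~\eqref{eq:con222}. The asymmetry between ``$>$'' in~\eqref{eq:con111} and ``$\ge$'' in~\eqref{eq:con222} is exactly what makes the contradiction strict, and it is important to preserve this in the write-up.
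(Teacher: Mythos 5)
Your proof is correct and follows essentially the same contradiction argument as the paper: assume $u(t_0,x)\in\mathcal{Y}_u$, integrate~\eqref{eq:con222} over $[t_0,T]$ to get $B(T,u(T,x))-B(t_0,u(t_0,x))\ge 0$, and contradict the strict inequality~\eqref{eq:con111}. The extra observations (that $t_0<T$ by disjointness, and the regularity needed for the fundamental theorem of calculus) are harmless refinements of the paper's own proof rather than a different route.
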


\begin{proof}
We prove the theorem by contradiction. Assume that at time $t_0 \in [0,T]$, there exists a solution $u(t,x)$ to \eqref{eq:FK} with $u(T,x)\in \mathcal{U}_T$ that satisfies $u(t_0,x) \in \mathcal{Y}_u$. Then, from~\eqref{eq:con111}, we have
\begin{equation} \label{fsfsdfsvccv}
B(t_0, u(t_0,x)) - B(T, u(T,x)) >0.
\end{equation}
On the other hand, inequality~\eqref{eq:con222} implies that for all $t \in [0,T]$, it holds that $ \frac{\mathrm{d}B(t,u(t,x))}{\mathrm{d}t} \ge 0$. Integrating from $t$ to $T$  both sides the latter inequality yields
$$
\int_t^{T}  \frac{\mathrm{d}B(t,u(t,x))}{\mathrm{d}t} \,\, \mathrm{d}t = B(T,u(T,x))-B(t,u(t,x)) \ge 0.
$$
Since $t,t_0 \in [0,T]$, this contradicts \eqref{fsfsdfsvccv}. Therefore, there is no solution to \eqref{eq:FK} that satisfies $u(t_0,x) \in \mathcal{Y}_u$.
\end{proof}

Next, we describe how Theorem~\ref{thm:fwd2} can be used to find bounds on the state functionals of SDE~\eqref{eq:SDE}. The method relies on an appropriate definition for the undesirable set $\mathcal{Y}_u$ and  a suitable optimization problem.

\begin{corollary}\label{cor:bwd}
Consider PDE~\eqref{eq:FK} and SDE~\eqref{eq:SDE}. Let
\begin{equation} \label{eq:rre33}
\mathcal{Y}_u = \left \{ u \in  \tilde{\mathcal{U}} \mid u(t_0,x) > \gamma  \right\}.
\end{equation}
If there exists a barrier functional $B(t,u(t,x)) \in \mathcal{C}^1[t]$ such that inequalities~\eqref{eq:con111} and~\eqref{eq:con222} are satisfied along the solutions of \eqref{eq:FK}, then it holds that
\begin{multline} \label{cvncmvcvm}
E\left[ \int_{t_0}^T h(s,X(s)) e^{-\int_{t_0}^s c(r,X(r)) \mathrm{d}r} \mathrm{d}s \right. \\ \left.
+f(X(T)) e^{-\int_{t_0}^T c(r,X(r)) \mathrm{d}r} \mid X(t_0)=x \right] \le \gamma.
\end{multline}
\end{corollary}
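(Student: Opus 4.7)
The plan is to reduce the corollary directly to Theorem~\ref{thm:fwd2} combined with the Feynman-Kac representation given in Theorem~\ref{thm:FK}. The undesirable set \eqref{eq:rre33} has been chosen precisely so that membership in $\mathcal{Y}_u$ encodes the event $u(t_0,x) > \gamma$; thus the avoidance conclusion of Theorem~\ref{thm:fwd2} at time $t_0$ is logically equivalent to the pointwise bound $u(t_0,x) \le \gamma$ on the viscosity solution of \eqref{eq:FK}.

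Concretely, I would argue as follows. First, I would verify that Theorem~\ref{thm:fwd2} applies: by assumption a barrier functional $B(t,u(t,x))$ satisfying \eqref{eq:con111} and \eqref{eq:con222} exists along the solutions of \eqref{eq:FK}, and $\mathcal{U}_T$ and $\mathcal{Y}_u$ are as prescribed. Invoking Theorem~\ref{thm:fwd2} then yields that the solution $u(t,x)$ of~\eqref{eq:FK} with terminal data $u(T,x)=f(x)$ satisfies $u(t_0,x)\notin\mathcal{Y}_u$, i.e.\ $u(t_0,x)\le\gamma$. Second, I would appeal to Theorem~\ref{thm:FK}: under Assumption~\ref{ass:1}, the viscosity solution $u$ of \eqref{eq:FK} admits the Feynman-Kac representation \eqref{eq:Solution}. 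Evaluating \eqref{eq:Solution} at $t=t_0$ identifies $u(t_0,x)$ with the conditional expectation on the left-hand side of \eqref{cvncmvcvm}. Combining these two facts yields \eqref{cvncmvcvm}.

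The only mild subtlety I foresee is the compatibility condition $\mathcal{U}_T \cap \mathcal{Y}_u = \emptyset$ required by Theorem~\ref{thm:fwd2}. Since $\mathcal{U}_T$ fixes the value at time $T$ while $\mathcal{Y}_u$ constrains the value at time $t_0 \in [0,T]$, these subsets of $\tilde{\mathcal{U}}$ are \emph{a priori} not disjoint. However, this is not a real obstacle: one can either restrict to the case $t_0<T$ (the case $t_0=T$ forcing $f(x)\le\gamma$ trivially), or note that the proof of Theorem~\ref{thm:fwd2} goes through by contradiction using only the inequalities \eqref{eq:con111}--\eqref{eq:con222}, so the disjointness hypothesis plays no role in the deduction $u(t_0,x)\le\gamma$. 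I would therefore either state this observation explicitly or appeal directly to the contradiction argument underlying Theorem~\ref{thm:fwd2}, after which the corollary follows in essentially two lines.
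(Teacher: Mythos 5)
Your proposal is correct and follows essentially the same route as the paper's own proof: invoke Theorem~\ref{thm:fwd2} with the undesirable set \eqref{eq:rre33} to conclude $u(t_0,x)\notin\mathcal{Y}_u$, hence $u(t_0,x)\le\gamma$, and then use the Feynman--Kac representation of Theorem~\ref{thm:FK} to identify $u(t_0,x)$ with the conditional expectation in \eqref{cvncmvcvm}. Your extra remark on the disjointness hypothesis $\mathcal{U}_T\cap\mathcal{Y}_u=\emptyset$ is a careful observation the paper does not address, but it does not alter the argument.
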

\begin{proof}
If there exists a barrier functional $B(t,u(t,x)) \in \mathcal{C}^1[t]$ such that inequalities~\eqref{eq:con111} and~\eqref{eq:con222} are satisfied along the solutions of \eqref{eq:FK}, from Theorem~\eqref{thm:fwd2}, we can infer that $u(t_0,x) \notin \mathcal{Y}_u$ with $\mathcal{Y}_u$ as described by~\eqref{eq:rre33}. Thus, $u(t_0,x) \le \gamma$. From Theorem~\ref{thm:FK}, we have
\begin{multline}
u(t_0,x) = E\left[ \int_{t_0}^T h(s,X(s)) e^{-\int_{t_0}^s c(r,X(r)) \mathrm{d}r} \mathrm{d}s \right. \\ \left. +f(X(T)) e^{-\int_{t_0}^T c(r,X(r)) \mathrm{d}r} \mid X(t_0)=x \right]. \nonumber
\end{multline}
Therefore, $u(t_0,x) \le \gamma$ implies that \eqref{cvncmvcvm} holds.
\end{proof}
In order to find the minimum  $\gamma$ in \eqref{cvncmvcvm}, i.e., the upper bound to the state functional, we solve the following optimization problem
\begin{equation} \label{eq:optm}
\displaystyle{\text{minimize}_{B(t,u(t,x))}} \gamma \quad  \text{\textit{subject to} \quad \eqref{eq:con}}.
\end{equation}

\begin{remark}
In~\cite{7040110}, an SDP-based method for bounding the moments of continuous-time Markov chains, based on  the Foster-Lyapunov stability theory~(see condition CD2$^\prime$ in~\cite{MeyTwe93}), is proposed. Continuous-time Markov chains can be represented by the Chemical Master Equations (CMEs)~\cite{Kampen200796}, which are a set of ODEs.  When the system is sufficiently large, CMEs can be approximated by a set of SDEs called Chemical Langevin Equations (CLEs)~\cite{cite1079741} to which the method studied in this paper can be applied to find bounds.
\end{remark}

\subsection{Verifying Set Avoidance for PDE Systems}
 
Next, we provide a solution to Problem 1 for forward systems based on the construction of barrier functionals satisfying a set of inequalities. 
 
\begin{theorem}[Set Avoidance for Forward Systems] \label{thm:fwd}
Consider the forward PDE system described by~\eqref{eq:pde}. Let $u \in   \mathcal{U}_S(q)$. Given  a set of initial conditions $ \mathcal{U}_0 \subseteq  \mathcal{U}_S(q)$, an undesirable set~$ \mathcal{Y}_u$, such that $ \mathcal{U}_0 \cap  \mathcal{Y}_u = \emptyset$,  and a constant $t_0>0$, if there exists a barrier functional $B(t,u(t,x)) \in \mathcal{C}^1[t]$ as in \eqref{eq:barrierfunctional},
 such that the following inequalities hold
 \begin{subequations}
\label{eq:confwd}
\begin{multline} \label{con1}
B(t_0, u(t_0,x)) - B(0, u(0,x)) >0, \\ \forall u(t_0,x) \in  \mathcal{Y}_u,~\forall u_0 \in  \mathcal{U}_0
\end{multline}
\begin{equation} \label{con2}
 \frac{\mathrm{d}B(t,u(t,x))}{\mathrm{d}t} \le 0,~ \forall t\in[0,t_0],~\forall u \in  \mathcal{U},
\end{equation}
\end{subequations}
along the solutions of~\eqref{eq:pde}, then the solutions of~\eqref{eq:pde} avoid $ \mathcal{Y}_u$  at time $t_0$ (cf. Definition~\ref{def:safe}).
\end{theorem}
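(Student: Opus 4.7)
The plan is to mirror the proof of Theorem~\ref{thm:fwd2} essentially verbatim, with the sign flip in~\eqref{con2} accounting for the fact that we now integrate forward in time. I would argue by contradiction: suppose there exists an initial condition $u(0,x)\in\mathcal{U}_0$ whose corresponding solution of~\eqref{eq:pde} satisfies $u(t_0,x)\in\mathcal{Y}_u$. Applying~\eqref{con1} to this particular pair $(u(0,x),u(t_0,x))$ gives
\[
B(t_0,u(t_0,x)) - B(0,u(0,x)) > 0.
\]

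In the second step, I would evaluate the integral of the time derivative of $B$ along that same trajectory. Since $B(t,u(t,x))\in\mathcal{C}^1[t]$, the fundamental theorem of calculus yields
\[
B(t_0,u(t_0,x)) - B(0,u(0,x)) = \int_0^{t_0}\frac{\mathrm{d}B(t,u(t,x))}{\mathrm{d}t}\,\mathrm{d}t.
\]
By~\eqref{con2} the integrand is non-positive for every $t\in[0,t_0]$ (and in particular along the chosen solution, since that solution lies in $\mathcal{U}$ by well-posedness of~\eqref{eq:pde}), so the right-hand side is $\le 0$. This contradicts the strict positivity obtained above, so no such solution exists and the undesirable set $\mathcal{Y}_u$ is avoided at time $t_0$ in the sense of Definition~\ref{def:safe}.

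I do not expect any real obstacle here: the argument is a direct forward-time analogue of the proof already given for Theorem~\ref{thm:fwd2}, and the only point that requires a line of justification is that~\eqref{con2} holds \emph{along solutions of~\eqref{eq:pde}}, which is precisely how the hypothesis is phrased (for all $u\in\mathcal{U}$, and solutions belong to $\mathcal{U}$). The one modeling subtlety worth flagging, should space permit, is implicit reliance on $\mathcal{U}_0\cap\mathcal{Y}_u=\emptyset$, which ensures that the contradiction cannot be trivially discharged at $t=0$.
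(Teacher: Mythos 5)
Your proposal is correct and follows essentially the same route as the paper's own proof: a contradiction argument that applies \eqref{con1} to a hypothetical violating trajectory and then integrates \eqref{con2} over $[0,t_0]$ via the fundamental theorem of calculus to obtain $B(t_0,u(t_0,x))-B(0,u(0,x))\le 0$. Your added remarks on why the trajectory lies in $\mathcal{U}$ and on the role of $\mathcal{U}_0\cap\mathcal{Y}_u=\emptyset$ are consistent with, and slightly more explicit than, the argument given in the paper.
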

\begin{proof}
The proof is by contradiction. Assume there exists a solution of \eqref{eq:pde} such that, at time $t_0$, $u(t_0,x) \in  \mathcal{Y}_u$ and inequality~\eqref{con1} holds.  From \eqref{con2}, it follows that
\begin{equation} \label{eq1}
 \frac{\mathrm{d}  B(t,u(t,x))}{\mathrm{d}t}    \le 0,
\end{equation}
for all  $t\in[0,t_0]$, and  $u \in  \mathcal{U}$. Integrating both sides of~\eqref{eq1} with respect to $t$ from $0$ to $t_0$ yields
\begin{multline*} 
\int_0^{t_0} \frac{\mathrm{d}  B}{\mathrm{d}t} \mathrm{d}t =  B(t_0, u(t_0,x)) - B(0, u(0,x)) \le  0.
\end{multline*}
for all $u \in  \mathcal{U}$. This  contradicts \eqref{con1}.
\end{proof}

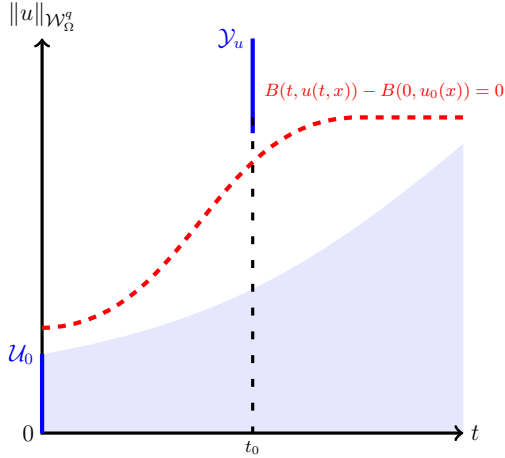
\begin{figure}
  \centering
\begin{tikzpicture}[scale=0.7, every node/.style={scale=0.75}]
\path[fill=blue!10] (0,0)--(0,1.5) to [out=10,in=220] (8,5.5) --(8,0);
\draw[very thick,<->] (8,0) node[font=\fontsize{13}{58}\sffamily\bfseries,right]{$t$} -- (0,0) node[font=\fontsize{13}{58}\sffamily\bfseries,left ]{$0$} -- (0,7.5) node[font=\fontsize{13}{58}\sffamily\bfseries,above]{$\| u \|_{ \mathcal{W}^q_{\Omega}}$};
\draw[dashed,red, ultra thick] (0,2) to [out=1,in=180] (6,6) --(8,6);
\draw[ultra thick,blue] (4,5.7)--(4,7.5) node[font=\fontsize{12}{58}\sffamily\bfseries,left ]{$ \mathcal{Y}_u$};
\draw[very thick,loosely dashed] (4,6)--(4,0) node[font=\fontsize{9}{58}\sffamily\bfseries,below ]{$t_0$};
\draw[ultra thick, blue] (0,0) --(0,1.5)node[font=\fontsize{12}{58}\sffamily\bfseries,left ]{$ \mathcal{U}_0$};
\draw [red] (6.5,6.8) node[font=\fontsize{9}{58}\sffamily\bfseries,below ]{$B(t,u(t,x))-B(0,u_0(x))=0$};
\end{tikzpicture}
  \caption{Illustration of a barrier functional for a forward PDE system: any solution $u(t,x)$ with  $u(0,x) \in \mathcal{U}_0$ (depicted by the shaded area) satisfies $u(t_0,x) \notin  \mathcal{Y}_u$. The system avoids $ \mathcal{Y}_{u}$ at time $t=t_0$ but not for  $\forall t>0$.}\label{tikzfig1}
\end{figure}

 \begin{remark}
 The level sets of~$B(t,u(t,x))-B(0, u_0(x))$  represent barrier surfaces in the $ \mathcal{U}$ space separating $ \mathcal{U}_0$ and $ \mathcal{Y}_u$ such that no solution of \eqref{eq:pde}  starting from $ \mathcal{U}_0$ enters $ \mathcal{Y}_u$ (hence, the term ``barrier functional''). This property is illustrated in Figure~\ref{tikzfig1}.
 \end{remark}

Theorem~\ref{thm:fwd} is concerned with conditions for set avoidance with respect to the undesirable set $ \mathcal{Y}_u$ at a particular time $t_0>0$. The next corollary follows from Theorem~\ref{thm:fwd} and gives conditions for set avoidance with respect to an undesirable set $ \mathcal{Y}_u$ for all time $t>0$.
\begin{corollary} \label{cor:fwd}
Consider the forward PDE system described by~\eqref{eq:pde}.  Assume  $u \in   \mathcal{U}_S(q)$. Given an undesirable set~$\mathcal{Y}_u \subset  \mathcal{Y}$, such that~$ \mathcal{U}_0 \cap  \mathcal{Y}_u = \emptyset$, if there exists a barrier functional $B(u(t,x))$ as in \eqref{eq:barrierfunctional}
 such that
 \begin{subequations}
 \begin{equation} \label{eq:cor:fwdcon1}
B(u(t,x)) - {B}(u_0(x)) > 0,~\forall u \in  \mathcal{Y}_u,~ \forall u_0 \in  \mathcal{U}_0,
 \end{equation}
 \begin{equation} \label{eq:cor:fwdcon2}
 \frac{\mathrm{d}B(u(t,x))}{\mathrm{d}t} \le 0,~\forall u \in  \mathcal{U},
 \end{equation}
 \end{subequations}
 along the solutions of \eqref{eq:pde}, then the solutions of forward PDE~\eqref{eq:pde} avoid $ \mathcal{Y}_u$ (cf. Definition~\ref{def:safe2}).
\end{corollary}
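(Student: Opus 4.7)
The plan is to reduce Corollary~\ref{cor:fwd} to Theorem~\ref{thm:fwd} by observing that the hypotheses of the corollary already imply the hypotheses of the theorem \emph{for every} $t_0>0$. Concretely, suppose a barrier functional $B(u(t,x))$ satisfying \eqref{eq:cor:fwdcon1}--\eqref{eq:cor:fwdcon2} exists. Then for any fixed $t_0>0$:

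\textbf{Step 1 (recover condition \eqref{con1}).} Because \eqref{eq:cor:fwdcon1} is a pointwise inequality that holds for every $u\in\mathcal{Y}_u$ and every $u_0\in\mathcal{U}_0$, and in particular contains no time variable, it can be instantiated at $u=u(t_0,x)$ whenever $u(t_0,x)\in\mathcal{Y}_u$. This produces exactly the requirement \eqref{con1} of Theorem~\ref{thm:fwd} (with $B(t,u)$ taken to be the time-independent functional $B(u)$, which trivially lies in $\mathcal{C}^1[t]$).

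\textbf{Step 2 (recover condition \eqref{con2}).} The derivative inequality \eqref{eq:cor:fwdcon2} is assumed to hold along the solutions of \eqref{eq:pde} for all $u\in\mathcal{U}$, without any restriction on $t$. Restricting this inequality to $t\in[0,t_0]$ yields exactly \eqref{con2}. Thus both hypotheses of Theorem~\ref{thm:fwd} are verified for the chosen $t_0$, and we conclude that the solutions of \eqref{eq:pde} avoid $\mathcal{Y}_u$ at time $t_0$ in the sense of Definition~\ref{def:safe}.

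\textbf{Step 3 (extend to all time).} Since the argument in Steps~1--2 is valid for an arbitrary $t_0>0$, the solutions of \eqref{eq:pde} avoid $\mathcal{Y}_u$ at time $t_0$ for every $t_0\in(0,\infty)$. Invoking Definition~\ref{def:safe2}, this is precisely the statement that \eqref{eq:pde} avoids $\mathcal{Y}_u$, completing the proof.

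There is essentially no technical obstacle here; the only point that requires a moment of care is recognising that the time-independence of $B$ and the absence of the interval restriction in \eqref{eq:cor:fwdcon2} are exactly what is needed to make the invocation of Theorem~\ref{thm:fwd} uniform in $t_0$. One could alternatively give a direct contradiction proof that mirrors the one for Theorem~\ref{thm:fwd}: assume some trajectory with $u_0\in\mathcal{U}_0$ satisfies $u(t^\ast,x)\in\mathcal{Y}_u$ for some $t^\ast>0$, integrate \eqref{eq:cor:fwdcon2} from $0$ to $t^\ast$ to obtain $B(u(t^\ast,x))-B(u_0(x))\le 0$, and contradict \eqref{eq:cor:fwdcon1}. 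This direct route is perhaps cleaner and avoids quoting Theorem~\ref{thm:fwd} at all, but it merely repeats its argument.
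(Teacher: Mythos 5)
Your proof is correct and in substance identical to the paper's: the paper proves the corollary by repeating the contradiction argument of Theorem~\ref{thm:fwd} for an arbitrary $t>0$ (integrate \eqref{eq:cor:fwdcon2} from $0$ to $t$ and contradict \eqref{eq:cor:fwdcon1}), which is exactly the ``direct route'' you note at the end, while your main presentation---invoking Theorem~\ref{thm:fwd} uniformly in $t_0$---is only a cosmetic repackaging of the same argument. No gap.
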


\begin{proof}
The proof follows the same lines as the proof of Theorem~\ref{thm:fwd}. Assume that there exists a solution $u(t,x)$ to~\eqref{eq:pde} such that, for some $t>0$, we have $u(t,x) \in \mathcal{Y}_u$. Then, from \eqref{eq:cor:fwdcon1}, it follows that $B(u(t,x)) - {B}(u_0(x)) > 0$. On the other hand, integrating inequality \eqref{eq:cor:fwdcon2} from $0$ to $t$ implies that $B(u(t,x)) - {B}(u_0(x)) \le 0$, which is a contradiction. Thus, since $t$ is arbitrary, the solutions to \eqref{eq:pde} avoid $\mathcal{Y}_u$ for all time.
\end{proof}

We conclude this section by illustrating Corollary~\ref{cor:fwd} with an example that uses the barrier formulation to bound a performance index. 
\begin{example}\textbf{(Performance Bounds)} Consider the heat equation defined over a domain $\Omega \subset \mathbb{R}^2$ 
\begin{equation} \label{eq:exampleheat}
\partial_t u = \Delta u,~~~~\mathrm{x} \in \Omega,~t>0,
\end{equation}
subject to $u |_{\partial \Omega} =0$ and
\begin{equation} \label{xcxz}
u(0,\mathrm{x}) \in  \mathcal{U}_0 = \left \{ u_0 \Big | \int_{\Omega} |\nabla u_0 |^2 \,\, \mathrm{d}\Omega \le 1 \right\}.
\end{equation}
The output mapping is given by
$$
y(t) =   \gamma^2 - \int_\Omega u^2\,\, \mathrm{d}\Omega,
$$
where $\gamma\geq0$. Then, the undesirable set is described as
$
 \mathcal{Y}_u =\left \{ u  \mid  y(t) = \gamma^2 - \int_\Omega u^2\,\, \mathrm{d}\Omega < 0 \right \}
$. We are interested in finding the minimum $\gamma$ such that no solution of\eqref{eq:exampleheat}  enters $ \mathcal{Y}_u$ for all $u(0,\mathrm{x}) \in  \mathcal{U}_0$.

We consider the  barrier functional~\eqref{eq:barrierfunctional} with
$$
\begin{array}{cc}
\mathscr{B}: & \mathcal{W}^1 \rightarrow \real_{\geq 0} \\ & u \mapsto \int_\Omega (\nabla u)^{T} \nabla u  \,\, \mathrm{d}\Omega,
\end{array}
$$
that is, $B(u(t,x))= \int_\Omega (\nabla u)^{T} \nabla u  \,\, \mathrm{d}\Omega$. We first check inequality \eqref{eq:cor:fwdcon2} along the solutions of \eqref{eq:exampleheat}:
\begin{align*} 
 \frac{\mathrm{d}  B(u(t,x))}{\mathrm{d}t}  &= \int_\Omega 2 \nabla u \partial_t \left( \nabla u  \right) \,\, \mathrm{d}\Omega \\ &= 2 \left(\nabla u \partial_t   u \right)|_{\partial \Omega} - 2 \int_\Omega \Delta u \partial_t   u  \,\, \mathrm{d}\Omega \\ &= -2 \int_\Omega  \left( \Delta u \right)^2  \,\, \mathrm{d}\Omega \le 0,
\end{align*}
where, in the second equality above, integration by parts and, in the third equality, the boundary conditions are used. Thus, inequality \eqref{eq:cor:fwdcon2} is satisfied. At this point, let us check inequality~\eqref{eq:cor:fwdcon1}. We have\begin{eqnarray} \nonumber
B(u(t,x))-B(u_0) &=&
\int_\Omega |\nabla u |^2 \,\, \mathrm{d}\Omega - \int_\Omega |\nabla u_0 |^2 \,\, \mathrm{d}\Omega \nonumber \\ &\ge& \int_\Omega |\nabla u  |^2  \,\, \mathrm{d}\Omega -1 \nonumber \\
 &\ge& C(\Omega) \int_\Omega  u^2\,\, \mathrm{d}\Omega -1, \nonumber
\end{eqnarray}
where $u_0 \in  \mathcal{U}_0$ as in~\eqref{xcxz} is applied to obtain the first inequality and Poincar\'{e} inequality~\cite{PW60} is used in the second inequality. Then, it follows that whenever $\gamma^2 > \frac{1}{C(\Omega)}$, we have
$
B(u(t,x))-B(u_0)>0,
$ 
and thus, from Theorem~\ref{thm:fwd}, system~\eqref{eq:exampleheat} avoids $ \mathcal{Y}_u$. Therefore, it holds that $y \notin  \mathcal{Y}_u$, which implies $y(t) = \gamma_{min}^2 - \int_\Omega u^2\,\, \mathrm{d}\Omega \ge 0$, i.e., $
\gamma_{min}^2 \ge \int_\Omega u^2\,\, \mathrm{d}\Omega,$ 
where $\gamma_{min}^2 = \frac{1}{C(\Omega)}$. For example, whenever $\Omega = \{ (x,y)\in \mathbb{R}^2 \mid |x+y|<1 \}$, we obtain $\gamma^2 =\frac{{2}}{\pi^2}$.
\end{example}

\section{Construction of Barriers Functionals} \label{sec:computational}

In this section, we study a specific class of barrier functionals. For the studied class and for particular sets $\mathcal{U}_0$, $\mathcal{U}_T$ and $\mathcal{Y}_u$,  the inequalities \eqref{eq:confwd} become integral inequalities. We then apply the method proposed in \cite{VAP16} to solve this class of inequalities to obtain the barrier functionals. For the case of polynomial data, the verification of the inequalities can be cast as constraints of an SDP. In this section, we abuse the notation and drop the dependence of $u$ on $t$ or/and $x$ for the sake of brevity

In the previous section, the barrier functionals were only assumed to be continuously differentiable with respect to time. In this section, we impose the following structure for the barrier functionals\begin{equation}
B(t,u) = \int_\Omega \zeta^d(D^{\alpha}u(t,\theta))^{T}\bar{B}(t,\theta)\zeta^d(D^{\alpha}u(t,\theta)) \mathrm{d} \theta
\label{eq:barrierquad}
\end{equation}
where $\Omega = [0,1]$ (note that any bounded domain $[a,b]$ on the real line can be mapped to $[0,1]$ with appropriate change of variables),  $\bar{B}: \real_{\geq 0}\times \Omega \rightarrow \real^{\sigma(n,d) \times \sigma(n,d)}$, $\bar{B}(t,x)\in \mathcal{C}^{1}[t], \forall x\in\Omega$, and the following quadratic-like structures for the undesirable and the initial sets
\begin{subequations}
\label{eq:quadsets}
\begin{equation}\label{eq:Yuquad}\mathcal{Y}_u =  \left\lbrace u \in \mathcal{U} \mid   \int_\Omega \zeta^{d}(D^{\alpha}u(\theta))^{T}Y(\theta) \zeta^d(D^{\alpha}u(\theta))   \mathrm{d}\theta \leq 0  \right\rbrace\end{equation} and the initial set 
\begin{equation}\mathcal{U}_0 =  \left\lbrace u_0 \in \mathcal{U} \mid  \int_\Omega \zeta^{d}(D^{\alpha}u(\theta))^{T}U_0(\theta) \zeta^d(D^{\alpha}u(\theta))  \mathrm{d}\theta \leq 0 \right\rbrace. \label{eq:U0quad}\end{equation}
\end{subequations}
where, $Y: \real_{\geq 0}\times \Omega \rightarrow \real^{\sigma(n,d) \times \sigma(n,d)}$ and  $U_0: \real_{\geq 0}\times \Omega \rightarrow \real^{\sigma(n,d) \times \sigma(n,d)}$.

We now present conditions for the verification of the barrier inequalities for sets as~\eqref{eq:quadsets}. 

Consider the following quadratic-like forms
\begin{equation*}
g_i(x) =  \zeta^{d}(D^{\alpha}u(x))^T G_i(x) \zeta^{d}(D^{\alpha}u(x)) 
\end{equation*}
$i  = 1,\ldots,r$  defining the set
\begin{equation}
\label{eq:setS}
\mathcal{S} = \left\lbrace u \in \mathcal{W}^{\alpha} \mid  \int_\Omega g_i(\theta)~\mathrm{d} \theta \leq 0,~i  = 1,\ldots,r     \right\rbrace.
\end{equation}

Note that for a given set  $\left\lbrace u \in \mathcal{W}^{\alpha} \mid  \int_\Omega h(\theta)~\mathrm{d} \theta = 0 \right\rbrace$ we can write  $g_1(x) = h(x)$, $g_2 = -h(x)$ to obtain the above representation.

Define 
\begin{subequations}
\label{eq:defV}
\begin{equation}
\label{eq:vi}
v_i(x):= \int_{0}^{x} g(\theta)~\mathrm{d} \theta,
\end{equation}
satisfying
\begin{align} 
v_i(0)  = 0, \label{eq:via} \\
\partial_x v_i(x)  -  g_i(x)  = 0 \label{eq:vib}
\end{align}
\end{subequations}
for $i = 1,\ldots,r$. Using~\eqref{eq:defV}, we represent the set~$\mathcal{S}$ as
$$\mathcal{S} = \left\lbrace u \in \mathcal{W}^{\alpha} \mid v_i(1)\leq 0,~i = 1,\ldots,r\right\rbrace.$$
 
 The following lemma is instrumental to cast the barrier inequalities as integral inequalities. Let $v(x) = \begin{bmatrix} v_1(x),\ldots,v_r(x)  \end{bmatrix}^T$ and $g(x)=\begin{bmatrix} g_1(x),\ldots,g_r(x)  \end{bmatrix}^T$.
 
 \begin{lemma} Given $R(t,u)$, if there exist a function vector $m: \mathcal{T}\times \Omega \rightarrow \real^{r}$ and a vector  $n \in \real^{r}_{\leq 0}$ such that
\begin{equation}
\label{eq:lemma1}
R(t,u) + n^{T}v(1)+ \int_\Omega m^{T}(t,\theta)  \left( \partial_\theta v(\theta)  -   g(\theta) \right)\mathrm{d} \theta > 0 
\end{equation}
$\forall t \in \mathcal{T}\subseteq \real_{\geq0}$, $\forall u \in \mathcal{U}$, then $R(t,u)\leq0$ for all $t \in \mathcal{T}$, for all $ u \in \mathcal{S}$.
\end{lemma}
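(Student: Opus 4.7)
The plan is to invoke a Lagrangian-style S-procedure argument that uses the auxiliary variable $v$ to encode the integral constraints defining $\mathcal{S}$ as a pointwise differential constraint plus an endpoint inequality. The crucial observation is that, by the very construction in \eqref{eq:vi}, the equality $\partial_x v(x) - g(x) = 0$ holds identically for every $u \in \mathcal{U}$; it is not a condition that cuts out $\mathcal{S}$, but rather an identity obeyed by the definition of $v$.

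First, I would substitute this identity into \eqref{eq:lemma1}. Since $\partial_\theta v(\theta) - g(\theta) = 0$ for every $u \in \mathcal{U}$, the multiplier term $\int_\Omega m^{T}(t,\theta)(\partial_\theta v(\theta)- g(\theta))\,\mathrm{d}\theta$ vanishes on the admissible set, so the hypothesis reduces to
$$ R(t,u) + n^{T}v(1) > 0, \qquad \forall\, t \in \mathcal{T},\ \forall\, u \in \mathcal{U}. $$

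Second, I would specialise this reduced inequality to $u \in \mathcal{S}$. Using the equivalent characterization $\mathcal{S} = \{ u \in \mathcal{W}^{\alpha} \mid v_i(1) \leq 0,\ i = 1, \ldots, r \}$ displayed just before the lemma, each component satisfies $v_i(1) \leq 0$. Combined with the hypothesis $n \in \real^{r}_{\leq 0}$, each product $n_i v_i(1) \geq 0$ (the product of two non-positive reals) and therefore $n^{T} v(1) \geq 0$. Combining this sign information with the reduced inequality yields the stated bound on $R(t,u)$ over $\mathcal{S}$.

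The argument is essentially a single application of the S-procedure, so I do not expect a genuine technical obstacle: the work is bookkeeping. The only conceptual subtlety, and arguably the whole point of introducing \eqref{eq:defV}, is recognising that the roles of $v$ and $m$ in \eqref{eq:lemma1} are, respectively, to convert the integral constraints defining $\mathcal{S}$ into an endpoint inequality on $v(1)$, and to supply a Lagrange multiplier enforcing the differential identity $\partial_\theta v = g$. This reformulation is what will allow the subsequent section to replace the test of \eqref{eq:lemma1} by SDP constraints whenever the problem data is polynomial, since the quadratic-like structure of $g$ and $v$ is preserved.
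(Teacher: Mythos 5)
Your reduction follows the paper's own argument step for step: by \eqref{eq:vib} the multiplier term is identically zero for every $u\in\mathcal{U}$, so \eqref{eq:lemma1} collapses to $R(t,u) > -n^{T}v(1)$ on all of $\mathcal{U}$, and on $\mathcal{S}$ the signs $n\in\real^{r}_{\leq 0}$, $v_i(1)\leq 0$ give $n^{T}v(1)\geq 0$. The gap is your closing sentence: ``combining this sign information with the reduced inequality yields the stated bound'' is exactly the step that does not go through. From $R(t,u) > -n^{T}v(1)$ with $n^{T}v(1)\geq 0$ you only obtain a \emph{lower} bound on $R(t,u)$ by a nonpositive quantity, which is vacuous and in particular cannot deliver the claimed \emph{upper} bound $R(t,u)\leq 0$. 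In fact the statement as literally written is false: take $R\equiv 1$, $m\equiv 0$ and $n=0\in\real^{r}_{\leq 0}$; then \eqref{eq:lemma1} holds for all $t$ and $u$, yet $R=1>0$ on any nonempty $\mathcal{S}$.

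In fairness, the paper's own proof makes the same leap (``Since $n^{T}v(1)\geq 0$ \ldots we have that $R(t,u)\leq 0$''), so you have reproduced a sign error in the lemma rather than introduced one. The version that the argument actually establishes --- and the one Proposition \ref{prop:intineqbarriers} needs in order to certify \eqref{con1}, which requires $B(t_0,u)-B(0,u_0)>0$ on $\mathcal{Y}_u\cap\mathcal{U}_0$ --- is: with multipliers $n\in\real^{r}_{\geq 0}$ (equivalently, keeping $n\leq 0$ but writing the certificate with $-n^{T}v(1)$), the hypothesis implies $R(t,u)>0$ for all $t\in\mathcal{T}$ and all $u\in\mathcal{S}$, since then $n^{T}v(1)\leq 0$ on $\mathcal{S}$ and hence $R(t,u)>-n^{T}v(1)\geq 0$. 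Your identification of the roles of $v$ and $m$ --- encoding the integral constraints of \eqref{eq:setS} as the endpoint conditions $v_i(1)\leq 0$, and enforcing the identity $\partial_\theta v=g$ via a Lagrange-type multiplier --- is correct; with the signs and the conclusion corrected as above, the proof is the two-line S-procedure you sketched.
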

\begin{proof}
From~\eqref{eq:vib}, we have, for any $m: \mathcal{T} \times \Omega \rightarrow \real^{r}$, 
\begin{equation*}m^{T}(t,x)\left( \partial_x v(x)  -   g(\theta) \right) = 0 \end{equation*}
for all $t \in \mathcal{T}$, $x\in \Omega$. Hence, for $v$ and $u$ related by \eqref{eq:vi} we have
\begin{equation*} \int_\Omega m^{T}(t,\theta)  \left( \partial_\theta v(\theta)  -  g(\theta) \right) \mathrm{d} \theta = 0.
\end{equation*}
and provided~\eqref{eq:lemma1} holds we have
\begin{equation*}
R(t,u) > -n^{T}v(1)
\end{equation*}
$\forall t \in \mathcal{T}$. Since $n^{T}v(1)\geq0$, for all $u \in \mathcal{S}$, we have that $R(t,u)\leq 0~\forall t \in \mathcal{T},~\forall u \in \mathcal{S}$.
\end{proof}

The following proposition applies the above lemma to formulate integral inequalities to verify the conditions of Theorem~\ref{thm:fwd} (a similar method can be applied to Theorem~\ref{thm:fwd2}) considering the barriers~\eqref{eq:barrierquad}, where we take  $\mathcal{S} = \mathcal{Y}_u \cap \mathcal{U}_0$, with the sets defined in~\eqref{eq:quadsets}, that is
\begin{align}
\label{eq:gprop}
g_1(x) &= \zeta^{d}(D^{\alpha}u(x))^{T}Y(x) \zeta^{d}(D^{\alpha}u(x)),\nonumber \\
g_2(x) &= \zeta^{d}(D^{\alpha}u(x))^{T}U_0(x) \zeta^{d}(D^{\alpha}u(x)). 
\end{align}

\begin{proposition}
\label{prop:intineqbarriers}
If there exist $\bar{B}: [0,t_0] \times \Omega \rightarrow \real^{\sigma(n,d) \times \sigma(n,d)}$, defining $B(t,u)$ as~\eqref{eq:barrierquad}, $m: \mathcal{T} \times \Omega \rightarrow \real^2$ and $n \in \real^{2}_{\leq0}$  such that the inequalities
\begin{subequations}
\begin{multline}
\left(B(t_0,u)  - B(0,u_0)  \right)   + n^{T}v(1) \\ + \int_{\Omega}^{} m^{T}(t,\theta)  \left( \partial_\theta v(\theta)  -   g(\theta) \right)\mathrm{d} \theta > 0,
\end{multline}
with $g(x) = \left[\begin{array}{cc} g_1(x) & g_2(x) \end{array} \right]^T$ defined by~\eqref{eq:gprop} and $v(x)=\begin{bmatrix} v_1(x) & v_2(x) \end{bmatrix}^T$ as defined by~\eqref{eq:defV}, and 
\begin{multline}
\label{eq:Bder}
 \int_\Omega \bigg( \zeta^d(D^{\alpha} u )^{T} \partial_t \bar{B}(t,\theta) \zeta^d(D^{\alpha} u ) \\ + 2 \zeta^d(D^{\alpha} u )^{T}  \bar{B}(t,\theta) \nabla \zeta^{d}(D^{\alpha} u )^{T} \partial_t u\bigg)~\mathrm{d}\theta\leq 0,
\end{multline}
\label{eq:barrier_poly}
\end{subequations}
$\forall t \in \left[0,t_0\right]$, $\forall u \in \mathcal{U}$,  then~\eqref{eq:confwd} holds. 
\end{proposition}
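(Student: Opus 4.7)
The plan is to derive the two barrier conditions~\eqref{con1} and~\eqref{con2} of Theorem~\ref{thm:fwd} from the two inequalities~\eqref{eq:barrier_poly}, exploiting the quadratic-like structures of the barrier~\eqref{eq:barrierquad} and of the sets~\eqref{eq:quadsets}.

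First, the derivative condition~\eqref{con2} should follow by a direct calculation. Differentiating $B(t,u)$ in~\eqref{eq:barrierquad} under the integral sign yields a $\partial_t \bar{B}$ term together with a chain-rule term involving $\partial_t \zeta^d(D^{\alpha}u)$; using the interchange $\partial_t D^{\alpha} u = D^{\alpha} \partial_t u$ along classical solutions of~\eqref{eq:pde}, this chain-rule term becomes $2\,\zeta^d(D^{\alpha} u)^{T} \bar{B}(t,\theta)\,\nabla \zeta^{d}(D^{\alpha} u)^{T} \partial_t u$. The resulting integrand matches exactly that of~\eqref{eq:Bder}, so this inequality is precisely $\mathrm{d}B/\mathrm{d}t \le 0$.

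Second, the separation condition~\eqref{con1} should follow from the lemma applied to $R := B(t_0,u)-B(0,u_0)$. Under~\eqref{eq:quadsets}, the memberships $u\in\mathcal{Y}_u$ and $u_0\in\mathcal{U}_0$ become $\int_\Omega g_i(\theta)\,\mathrm{d}\theta \le 0$, $i=1,2$, with $g_i$ as in~\eqref{eq:gprop}. Defining $v_1,v_2$ via~\eqref{eq:defV}, the identity $\partial_\theta v \equiv g$ kills the $m$-weighted integral for any admissible $m$, while on $\mathcal{Y}_u\cap\mathcal{U}_0$ we have $v(1)\le 0$ componentwise. Plugging these into the first inequality of~\eqref{eq:barrier_poly} leaves $R > -n^{T}v(1)$, and the sign convention on $n$ combined with the nonpositivity of $v(1)$ then delivers $R>0$, which is exactly~\eqref{con1}.

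The main obstacle is a bookkeeping one: the lemma is phrased with a single constrained variable $u\in\mathcal{S}$, whereas~\eqref{con1} is quantified over the pair $(u,u_0)\in\mathcal{Y}_u\times\mathcal{U}_0$, with the two components constrained independently at different time instants. The cleanest resolution is to view the pair in the product space and reapply the proof of the lemma verbatim, using that $v_1$ depends only on $u$ and $v_2$ only on $u_0$, so their contributions decouple. One must also be careful with the direction of the inequality and the sign of $n$ so that the $S$-procedure slack term contributes in the correct sense to yield $R>0$ on the constrained set; once this is settled, matching the integrand of the derivative calculation to~\eqref{eq:Bder} closes the argument.
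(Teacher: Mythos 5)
Your two-step plan is exactly the paper's (implicit) argument: the proposition is stated there without a separate proof, being read off from Lemma~1 applied with $\mathcal{S}=\mathcal{Y}_u\cap\mathcal{U}_0$ and $g_1,g_2$ as in~\eqref{eq:gprop}, together with the observation that~\eqref{eq:Bder} is precisely $\mathrm{d}B/\mathrm{d}t\le 0$ obtained by differentiating~\eqref{eq:barrierquad} under the integral sign along classical solutions of~\eqref{eq:pde}. Your product-space reading of the constraint (with $v_1$ built from $u(t_0,\cdot)$ and $v_2$ from $u_0$, so the two memberships decouple) is also the intended meaning of the paper's shorthand $\mathcal{S}=\mathcal{Y}_u\cap\mathcal{U}_0$, and the derivative part of your argument is fine.

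The one point you defer (``once this is settled'') is, however, exactly the point that does not close under the stated conventions, and in the body of your sketch you assert it incorrectly. With $n\in\real^2_{\le 0}$ and with $u\in\mathcal{Y}_u$, $u_0\in\mathcal{U}_0$ giving $v(1)\le 0$ componentwise, one has $n^{T}v(1)\ge 0$; the certificate inequality then yields $R> -n^{T}v(1)$ with $-n^{T}v(1)\le 0$, which is a vacuous lower bound and does not give~\eqref{con1}. For the slack term to help, it must be nonpositive on the constrained set, i.e.\ the multiplier must satisfy $n\ge 0$ (equivalently, keep $n\le 0$ but let the term enter as $-n^{T}v(1)$); then $R>-n^{T}v(1)\ge 0$ on $\mathcal{Y}_u\times\mathcal{U}_0$, which is~\eqref{con1}. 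Note also that you cannot ``reapply the proof of the lemma verbatim'': Lemma~1 as printed concludes $R\le 0$ on $\mathcal{S}$, the opposite of what the proposition needs, and its own proof only establishes $R>-n^{T}v(1)$; the statement is evidently garbled, and the reading consistent with the proposition is conclusion $R>0$ on $\mathcal{S}$ with the corrected multiplier sign. State that corrected lemma explicitly, apply it to $R=B(t_0,u)-B(0,u_0)$, and your argument is complete; as it stands, the crucial sign step is both misasserted and left unresolved.
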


A method to solve integral inequalities as~\eqref{eq:barrier_poly}  has been proposed in \cite{VAP16} (also see~\cite{VAP15} for the formulation for $\Omega \subset \mathbb{R}^2$). In the proposed method, the problem of checking the integral inequality is cast as the problem of solving a differential linear matrix inequality. Such a formulation is possible thanks to the use of  quadratic-like expressions as in~\eqref{eq:barrierquad},~\eqref{eq:quadsets}.  Furthermore, in~\cite{VAP16} it is also shown how, for polynomial data, the corresponding differential matrix inequalities can be converted to Sum-of-Squares (SOS) program, which is then cast as an SDP. 

The numerical results presented in the next section consider the problem data to be polynomial, i.e. the functions $\bar{B}$, $m$, $Y$, $U_0$ appearing in the inequalities of Proposition~\ref{prop:intineqbarriers} are polynomials on variables $t$ and $x$, and the operator $\mathscr{F}$ in~\eqref{eq:pde} may be nonlinear and defined by a polynomial on $u$ and its spatial derivatives with coefficients that are polynomials on the spatial variables. The formulation of the SDPs can be automated and a plug-in to SOSTOOLS~\cite{valmorbida2015introducing} has been developed.

 
\begin{remark}  The assumptions on the set of terminal conditions $\mathcal{U}_T$ is the same as the assumptions on $\mathcal{U}_0$ for the computational formulation. In addition, for the set of terminal or initial conditions in the form of \eqref{t543}, we just need to substitute $u(t_0,x)=\phi(x)$ in the barrier functional, i.e.,
$B(t_0,u(t_0,x)) = \int_\Omega b(t_0,x,D^\beta \phi(x)) \,\, \mathrm{d}x.$ \end{remark}


\section{Examples} \label{sec:examples}

We now illustrate the proposed results with three numerical examples. The first example is associated with the option pricing problem from quantitative finance. The second example considers finding bounds on a functional of the states of an SDE describing biological stochastic resonance.  Lastly, the third example concerns a diffusion-reaction-convection PDE. 
The numerical results given in this section were obtained using  SOSTOOLS v. 3.00~\cite{PAVPSP13} and the associated SDPs were solved using SeDuMi v.1.02~\cite{Stu98} on a system with 2.5~GHz Intel Core~i5 and 16 GB of~RAM.

\subsection{Example 1: Option Pricing}

\begin{figure}[t]
\centerline{\includegraphics[scale=0.3]{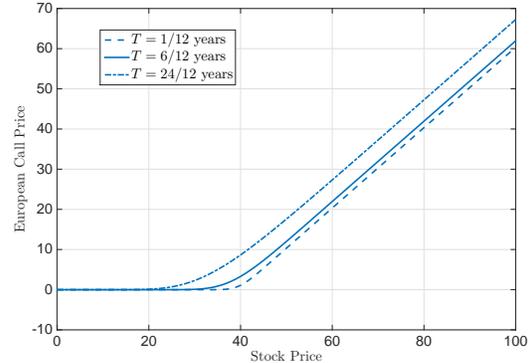}}
\caption{The solution to the Black-Scholes PDE~\eqref{eq:BS} at $t=0$ for different expiration times $T$ for a European call option.\label{fig3fsdf}}
\end{figure}

Consider the following SDE
\begin{equation}\label{eq:sdekllkkl}
\mathrm{d}S(\tau) = r S(\tau)\mathrm{d}s + \sigma S(\tau)\mathrm{d}W(\tau),~\tau\in [0,T],
\end{equation}
where $S$ is the spot price of the stock and $W$ is a Wiener process representing the uncertainty in the price history of the stock. The parameter $r\ge0$ denotes the risk-free interest rate and  $\sigma\ge0$ is the volatility of the stock. Let $\bar{s}$ be the maximum stock price. Applying  Theorem~\ref{thm:FK}, the value of an option maturing at time $T$ is defined as
\begin{equation*}
u(t,s) = e^{-r(T-t)} E\left[ f(S(T)) \mid S(t)=s   \right],
\end{equation*}
and satisfies the following backward PDE
\begin{multline} \label{eq:BS}
-\partial_t u(t,s) = \frac{\sigma^2 s^2}{2} \partial_s^2 u(t,s) + rs \partial_s u(t,s)-ru(t,s),\\~(t,s)\in [0,T]\times [0,\bar{s}],
\end{multline}
which is  the Black-Scholes equation for a non-dividend-paying stock (see also~\cite[p. 331]{Hu15}). For the European call option the terminal and the boundary conditions are given as
 \begin{equation*}
 \begin{cases}
 u(T,s)= f(s)=\max\left\{s-K, 0  \right\},\\
 u(t,0)= 0, \\
 u(t,\bar{s})=\bar{s},
 \end{cases}
 \end{equation*}
 where $K>0$ is the strike price. Assuming the stock is at-the-money, $f(s)=s-K$. The parameter values for a European call option~\cite[p. 338]{Hu15} are described as
 \begin{equation*}
 \begin{cases}
 T= 6/12~\text{(years)},~K=\$40,\\
 r=0.1,~\sigma=0.2.
 \end{cases}
 \end{equation*}
The maximum stock price is set to $\bar{s}=\$100$. The closed-form solution to~\eqref{eq:BS} is given by the Black-Scholes-Merton pricing formula~\cite{Mer73} (see \cite[p. 76]{9780511812545} for the derivation of the closed-form solution)
$$
u(t,s) = s N(d_1(t,s))-K e^{-r(T-t)}N(d_2(t,s)),
$$
where 
\begin{align}
d_1(t,s) &= \frac{\log(\frac{s}{K})+(r+\frac{\sigma^2}{2})(T-t)}{\sigma \sqrt{T-t}} \nonumber \\
d_2(t,s) &= d_1(t,s) -\sigma \sqrt{T-t}, \nonumber
\end{align}
and $N(\cdot)$ is the cumulative distribution function for a variable with a standard Gaussian distribution. Figure~\ref{fig3fsdf} illustrates the call option prices with respect to $s \in [0,100]$, which shows that the price of the option rises more when the option matures sooner. 

We are interested in finding an upper bound on the average option price $\frac{1}{\bar{s}} \int^{\bar{s}}_0 u(0,s) \,\, \mathrm{d}s$ without solving~\eqref{eq:sdekllkkl} or~\eqref{eq:BS}. To this end, we define
 $$
 \mathcal{Y}_u = \left\lbrace u \in \mathcal{L}^{1}_{[0,\bar{s}]}  \mid \frac{1}{\bar{s}} \int^{\bar{s}}_0 u(0,\theta) \,\, \mathrm{d}\theta  \ge \gamma   \right\rbrace,
 $$
 and we consider the following barrier functional
 \begin{equation*}
 B(t,u(t,x)) = \int_0^{\bar{s}} b(t,\theta) u^2(t,\theta) \,\, \mathrm{d}\theta.
 \end{equation*}
Using Theorem~\ref{thm:fwd2} and optimization problem~\eqref{eq:optm}, we obtain the upper bound $18.23$. As a matter of comparison, the actual upper bound for the average option price is $18.227$. The barrier functional certificate of degree $6$, given in Appendix B, was constructed in less than 3 seconds.

\subsection{Example 2: Stochastic Resonance in Biological Systems}

A switch-like response is observed in various signaling pathways
in biological systems. One method for modeling this behavior is by \textit{stochastic resonance}~\cite{Lon03}. Consider the following SDE
\begin{equation} \label{eq:sde22}
dX(\tau) = -\partial_X U(X(\tau)) \,\, \mathrm{d}\tau + \sigma(X(\tau)) \mathrm{d}W(\tau),~\tau \in [0,T]
\end{equation}
where $W(t)$ is a Wiener process, $U(X)=\frac{X^4}{4}-\frac{X^2}{2}$ is the double well potential, and $\sigma(X)= \sigma_0 \sqrt{1+X^2}$. System~\eqref{eq:sde22} with $dW=0$ has two stable equilibria at $X=+1,-1$ and an unstable equilibrium at $X=0$. Figure~\ref{fig3fsvdf} illustrates  five simulations of the system trajectories with initial condition~$X(0)=2$. The double well potential is common in nature. For instance, under certain circumstances, axons can function with two stable resting potentials. An axon is a nerve fiber projecting out of a neuron that conducts electrical impulses away from the neuron cell body. 

\begin{figure}[t]
\centerline{\includegraphics[scale=0.3]{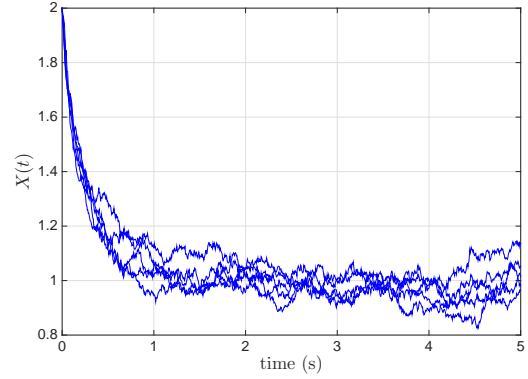}}
\caption{Five solutions to SDE~\eqref{eq:sde22} starting at $X(0)=2$.\label{fig3fsvdf}}
\end{figure}

Let $x \in \Omega = (0,5)$. Consider the following functional of the states 
\begin{multline*}
u(t,x) = E \left[  X^4(T)  + \int_t^T X^4(\theta) \,\, \mathrm{d}\theta \mid X(t)=x\right],
\end{multline*}
which is a functional of the $4$-th moment of the solutions to~\eqref{eq:sde22}. 
 Then, from Theorem~\ref{thm:FKBC}, $u(t,x)$ satisfies
\begin{multline*}
-\partial_t u(t,x) = \frac{\sigma_0^2(1+x^2)}{2} \partial_x^2 u(t,x) \\+x(1-x^2)\partial_xu(t,x)+x^4,~t \in [0,T],~x\in \Omega.
\end{multline*}
subject to the following terminal and boundary conditions
\begin{equation*}
u(T,x)=x^4,~u(t,0)=0,~u(t,5)=5^4.
\end{equation*}
We are interested in finding an upper bound to 
\begin{equation} \label{eq:xcxsdsd}
u(0,4) = E \left[  X^4(T) + \int_0^T X^4(\theta) \,\, \mathrm{d}\theta \mid X(0)=4\right],
\end{equation}
with $T=5$ seconds. To this end, we consider 
$$
B(t,u) = \int_0^5 b(t,\theta) u^2(t,\theta)\,\, \mathrm{d}\theta.
$$
The obtained bound using the proposed method is~257.89. The value for~\eqref{eq:xcxsdsd} computed by Monte Carlo simulations and trapezoidal integration was 256.4851 (see Figure~\ref{fig3rrefsvdf} for the time evolution of~$u(t,4)$). The constructed certificate of degree 8 computed in less than $8$ seconds is given in Appendix B. 
\begin{figure}[t]
\centerline{\includegraphics[scale=0.3]{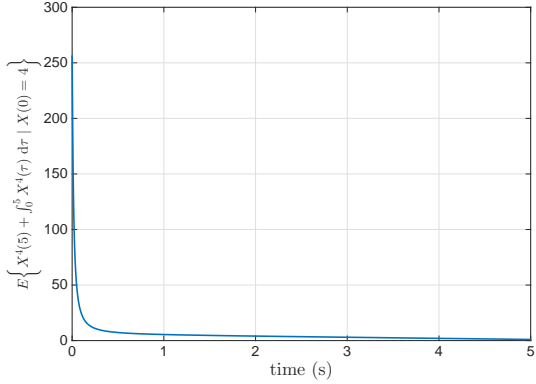}}
\caption{The evolution of $u(t,4)$ with respect to time.\label{fig3rrefsvdf}}
\end{figure}
\subsection{Example 3: Diffusion-Reaction-Convection PDE}

Consider 
\begin{equation} \label{eq:examplemain}
\partial_t u = \partial_x^2 u + \lambda u - 2 u \partial_x u,~u(t,0)=u(t,1)=0,
\end{equation}
where $\lambda>0$, $x \in (0,1)$ and $t>0$.  Due to the presence of a nonlinear convection term, the solutions with $\lambda \ge \pi^2$ (otherwise unstable) may converge to a different stationary solution. This stems from the fact that the convection term transfers low wave number components of the solutions to the high wave number ones for which the diffusion term has a stabilizing effect in a similar fashion to the effects of diffusion and anti-diffusion terms in the Kuramoto-Sivashinsky equation~\cite{Ott09}. Figure~\ref{fig3} depicts a solution to PDE~\eqref{eq:examplemain} with $\lambda > \pi^2$.

We are interested in computing the maximum value for parameter $\lambda$, such that the solutions starting in
\begin{equation} \label{eq:u00}
 \mathcal{U}_0 = \left\lbrace u_0   \mid \int_0^1 \left( u_0^2 + (\partial_\theta u_0)^2  \right) \,\, \mathrm{d}\theta \le 1  \right\rbrace,
\end{equation}
which implies $\| u_0 \|_{\mathcal{W}^1_{(0,1)}} \le 1$, do not enter the set
$ \mathcal{Y}_u = \left\lbrace u   \mid \int_0^1 \left( u^2 + (\partial_\theta u)^2  \right) \,\, \mathrm{d}\theta \ge (6)^2   \right\rbrace,$
i.e., $\| u \|_{\mathcal{W}^1_{(0,1)}} \ge {6}$ for all $t>0$. To this end, we consider the following barrier functional structure
\begin{equation} \label{bbbasjka}
B(t,u(t,x)) = \int_0^1 \left[\begin{smallmatrix} u(t,\theta) \\ \partial_\theta u(t,\theta)  \end{smallmatrix}\right]^{T}  M(\theta) \left[\begin{smallmatrix} u\theta \\ \partial_\theta u(t,\theta)  \end{smallmatrix}\right] \,\, \mathrm{d}\theta,
\end{equation}
where $M(\theta) \in \mathbb{R}^{2\times 2}$. Applying Corollary~\ref{cor:fwd} and performing a line search for $\lambda$, the maximum parameter $\lambda$, for which the solutions avoid $ \mathcal{Y}_u$, is found to be $\lambda = 1.195\pi^2$, for which the barrier functional~\eqref{bbbasjka} was constructed with a degree-16 $M(\theta)$ as given in Appendix B less than 16 seconds. This is consistent with the numerical experiments  shown in Figure~\ref{fig2}, where the $\mathcal{W}^1$-norm of the solution to PDE~\eqref{eq:examplemain} with $\lambda=1.2\pi^2$ was computed  for four different initial conditions $u_0(x) \in  \mathcal{U}_0$ as in~\eqref{eq:u00}.
\begin{figure}[!tb]
\begin{psfrags}
     \psfrag{t}[l][l]{\footnotesize $t$}
     \psfrag{x}[l][l]{\scriptsize $x$}
     \psfrag{u}[c][l][1][-90]{\footnotesize $u(t,x)$}
\epsfxsize=7.5cm
\centerline{\epsffile{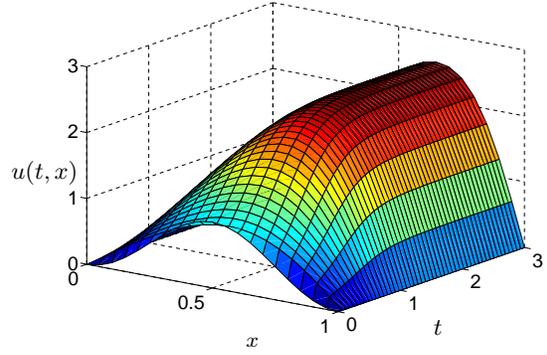}}
\end{psfrags}
\caption{The solution to PDE~\eqref{eq:examplemain} for $\lambda=1.2\pi^2$.\label{fig3}}
\end{figure}

\begin{figure}[!tb]
\begin{psfrags}
     \psfrag{t}[l][l]{\footnotesize $t$}
     \psfrag{u100000}[l][l]{\scriptsize $u_0 = 2x^2(1-x)^2$}
     \psfrag{u2}[l][l]{\scriptsize $u_0 = 0.4x(e^x - e)$}
     \psfrag{u3}[l][l]{\scriptsize $u_0 = x(1-x)$}
     \psfrag{u4}[l][l]{\scriptsize $u_0 = \ln (\frac{11}{20}x(1-x)+1)$}
     \psfrag{u}[c][l][1][-90]{\footnotesize $\|u\|_{\mathcal{W}^{1}_{(0,1)}}~~~$}
\epsfxsize=7.5cm
\centerline{\epsffile{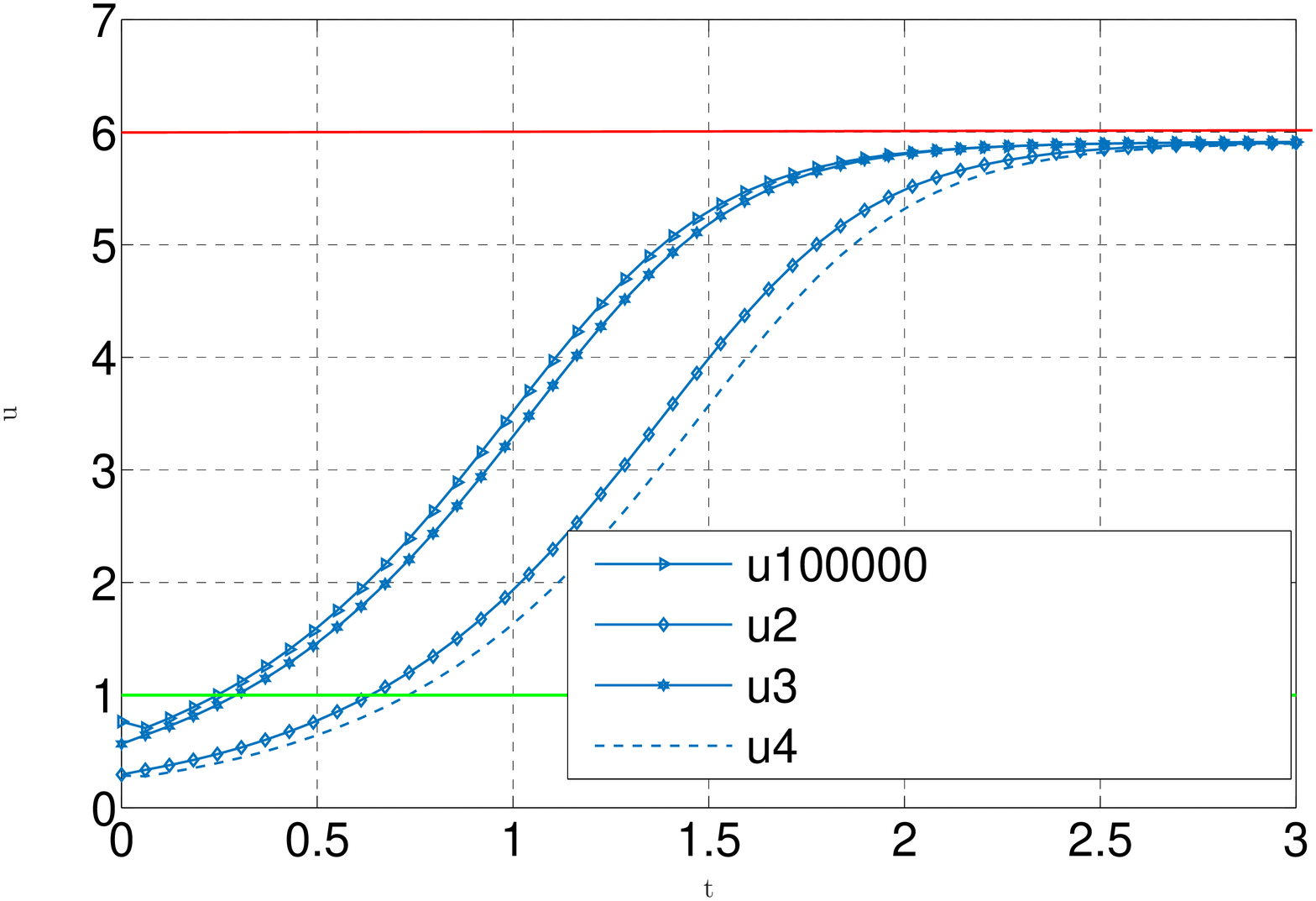}}
\end{psfrags}
\caption{The evolution of $\mathcal{W}^1_{(0,1)}$-norm of solutions to \eqref{eq:examplemain} with  $\lambda=1.2\pi^2$ for  different initial conditions. The red and green lines show the boundaries of $ \mathcal{Y}_{u}$ and $ \mathcal{U}_0$, respectively.\label{fig2}}
\end{figure}
 
 \section{Conclusion}\label{sec:conclusion}

We proposed a method based on barrier functionals to address  analysis problems of complex dynamical systems. For SDEs, we presented a method for bounding functionals of the states thanks to the Feynman-Kac formula. For PDEs, we developed a  method for verifying whether the solutions of the PDE would avoid an undesirable set. Numerical examples illustrate the computation of barrier functional certificates by SDPs for problems with polynomial data and equations in one-dimensional spatial domain.  The extension of the results to systems in two-dimensional domains is under study and preliminary results were presented in~\cite{VAP15}.



\bibliographystyle{plain}        
\bibliography{references}           



\appendix
\section{Well-posedness of PDE Systems}
%
 \renewcommand{\theequation}{A.\arabic{equation}}
  \setcounter{equation}{0}  
  
  
We briefly review aspects related to the on well-posedness of PDEs.In the case where $\mathscr{F}$ is a linear operator, the well-posedness problem of \eqref{eq:pde} is tied to ${F}$ being the generator of a strongly continuous semigroup denoted $C^0$-Semigroup~\cite[Chapter 2.1]{CZ95}. In this respect, the Hille-Yosida theorem~\cite[Theorem 3.4.1]{Sta05}, \cite[Theorem 2.1.12]{CZ95} provides necessary and sufficient conditions for such generators. In addition, given an operator, the Lumer-Phillips theorem~\cite[Theorem 3.4.5]{Sta05},\cite{lumer1961},\cite[Theorem 3.8.6]{TW09} presents conditions for the generator of a strongly continuous semigroup  that are easier to verify based on checking whether the operator is dissipative.

If $\mathscr{F}$ is a nonlinear dissipative operator satisfying
$$
Dom(\mathscr{F}) \subset Ran(\mathscr{I}-\lambda \mathscr{F}),~~~~\forall \lambda >0,
$$
with $\mathscr{I}$ representing the identity operator, then $\mathscr{F}$ generates a (nonlinear) semigroup of contractions~\cite[Corollary 2.10]{Mi92}. In addition, uniqueness and existence of the solutions to \eqref{eq:pde}  follows from~\cite[Theorem 4.10 and Theorem 5.1]{Mi92}.

\section{Numerical Results}
 
Neglecting the terms with coefficients smaller than $10^{-4}$, the constructed certificate for Example 1 is given by\begin{multline}
10^4b(t,\theta) = -7.916\theta^6+105.7\theta^5t+ 195.0\theta^5-315.15\theta^4t^2 \\ 
+175.7\theta^4t-348.2\theta^4-35.99\theta^3t^3-26.33\theta^3t \\
-72.06\theta^3+42.64\theta^2t^3-66.52\theta^2t^2+203.8\theta^2t\\
-228.9\theta^2-2.782\theta t^5-4.065\theta t^4-228.9\theta^2\\
-2.782\theta t^5-4.065\theta t^4-1.184\theta t^2+2.485\theta t\\
-15.97\theta-631.9t^6+62.17t^5-162.0t^4\\
+230.8t^3-59.17t^2+717.7t-705.7. \nonumber
\end{multline}
The constructed certificate for Example 2 is 
\begin{multline}
10^4 b(t,\theta) = 1.794t^7\theta + 2.789t^7  - 3.187t^6\theta - 6.006t^6 \\
+ 5.092t^5\theta^3 - 1.344t^5\theta^2 - 3.984t^5\theta + 3.039t^5 \\
 - 7.186t^4\theta^3 - 2.41t^4\theta^2 + 8.11t^4\theta + 3.204t^4 \\
  - 2.429t^3\theta^5 + 5.783t^3\theta^4 + 1.152t^3\theta^3 + 2.905t^3\theta^2\\
  - 2.765t^3 - 2.398t^2\theta^5 - 1.27t^2\theta^4 + 1.315t^2\theta^3 \\
  + 4.31t^2\theta^2 - 2.42t^2\theta - 1.086t^2 - 1.061t\theta^5 \\
  + 6.986t\theta^4 - 5.973t\theta^2  + 2.757t + 1.508\theta^6 \\
   + 1.889\theta^2 -1.895. \nonumber
\end{multline}
Neglecting the terms with coefficients smaller than $10^{-4}$, the constructed certificate for Example 3 is given by
\begin{equation*}
M(\theta) = \left[\begin{smallmatrix} M_{11}(\theta) & M_{12}(x) \\ M_{12}(\theta)  & M_{22}(\theta)   \end{smallmatrix}\right],
\end{equation*}
\begin{multline}
10^{4}M_{11}(\theta) = - 12.96\theta^{16} + 27.92\theta^{15} - 55.38\theta^{14} \\- 160.6\theta^{13}
 - 222.4\theta^{12} + 180.8\theta^{11} + 199.1\theta^{10} \\+ 332.9\theta^9
  - 343.5\theta^8 - 454.9\theta^7 - 390.1\theta^6 \\+ 329.9\theta^5
 + 666.7\theta^4 - 83.37\theta^3 - 663.4\theta^2 \\+ 418.7\theta - 74.97, \nonumber
\end{multline}
\vspace{-3mm}
\begin{multline}
10^{4}M_{12}(\theta) =  1.39\theta^{16} - 26.03\theta^{15} + 10.76\theta^{14} \\
+ 22.53\theta^{13} - 14.63\theta^{12} - 22.81\theta^{11} + 52.28\theta^{10}\\
 - 67.56\theta^9 - 69.45\theta^8 - 87.54\theta^7 + 79.37\theta^6 \\
 + 262.8\theta^5 - 32.63\theta^4 - 447.1\theta^3 + 417.7\theta^2 \\
 - 157.6\theta + 23.88, \nonumber
 \end{multline}
 \begin{multline}
10^{4}M_{22}(\theta) =  - 1.607\theta^{16}  - 26.85\theta^{14} + 47.17\theta^{13} \\
+ 38.69\theta^{12} - 77.1\theta^{11} - 34.36\theta^{10} + 66.47\theta^9 \\
+ 13.36\theta^8 - 34.57\theta^7 - 1.477\theta^6 + 17.13\theta^5 \\
- 9.405\theta^4 + 2.768\theta^3. \nonumber
 \end{multline}
\normalsize

\end{document}